\definecolor{mains}{cmyk}{.3, .85, .75, 0}  
\definecolor{afb}{rgb}{0.03, 0.27, 0.49}
\definecolor{def}{rgb}{0.27, 0.03, 0.49}
\newcounter{FNC}[page]
\def\fauxfootnote#1{{\addtocounter{FNC}{2}$^\fnsymbol{FNC}$%
     \let\thefootnote\relax\footnotetext{$^\fnsymbol{FNC}$\Magenta{#1}}}}
\numberwithin{equation}{section}
\newtheorem{theorem}{Theorem}[section]
\newtheorem{lemma}[theorem]{Lemma}
\newtheorem{thm}[theorem]{Theorem}
\newtheorem{defn}[theorem]{Definition}
\newtheorem{exm}[theorem]{Example}
\newtheorem{rem}[theorem]{Remark}
\newenvironment{definition}[1][]{\rm\begin{defn}[#1]\rm}{\end{defn}}
\newenvironment{example}[1][]{\rm\begin{exm}[#1]\rm}{\end{exm}}
\newenvironment{remark}[1][]{\rm\begin{rem}[#1]\rm}{\end{rem}}
\author{Stefan Forcey} \address[S. Forcey]{
    Department of Mathematics\\
    The University of Akron\\
    Akron, OH 44325-4002
    }
    \email{sforcey@uakron.edu}  \urladdr{http://www.math.uakron.edu/\~{}sf34/}
\title[Kalmanson-Kron Reconstruction]{Circular planar electrical networks, Split systems, and Phylogenetic networks.}
\keywords{phylogenetic networks, electrical networks, metrics, splits, polytopes}
\subjclass[2000]{05C50, 05C10, 92D15, 94C15, 90C05, 52B11}
\begin{document}

\begin{abstract}

We study a new invariant of circular planar electrical networks, well known to phylogeneticists: the circular split system. We use our invariant to answer some open questions about levels of complexity of networks and their related Kalmanson metrics. The key to our analysis is the realization that certain matrices arising from weighted split systems are studied in another guise: the Kron reductions of Laplacian matrices of planar electrical networks.  Specifically we show that a response matrix of a circular planar electrical network corresponds to a unique resistance metric obeying the Kalmanson condition, and thus a unique weighted circular split system. Our results allow interchange of methods: phylogenetic reconstruction using theorems about electrical networks, and circuit reconstruction using phylogenetic techniques.  

 \end{abstract}

\maketitle


 \begin{figure}[h!]
    \centering
    \includegraphics[width=\textwidth]{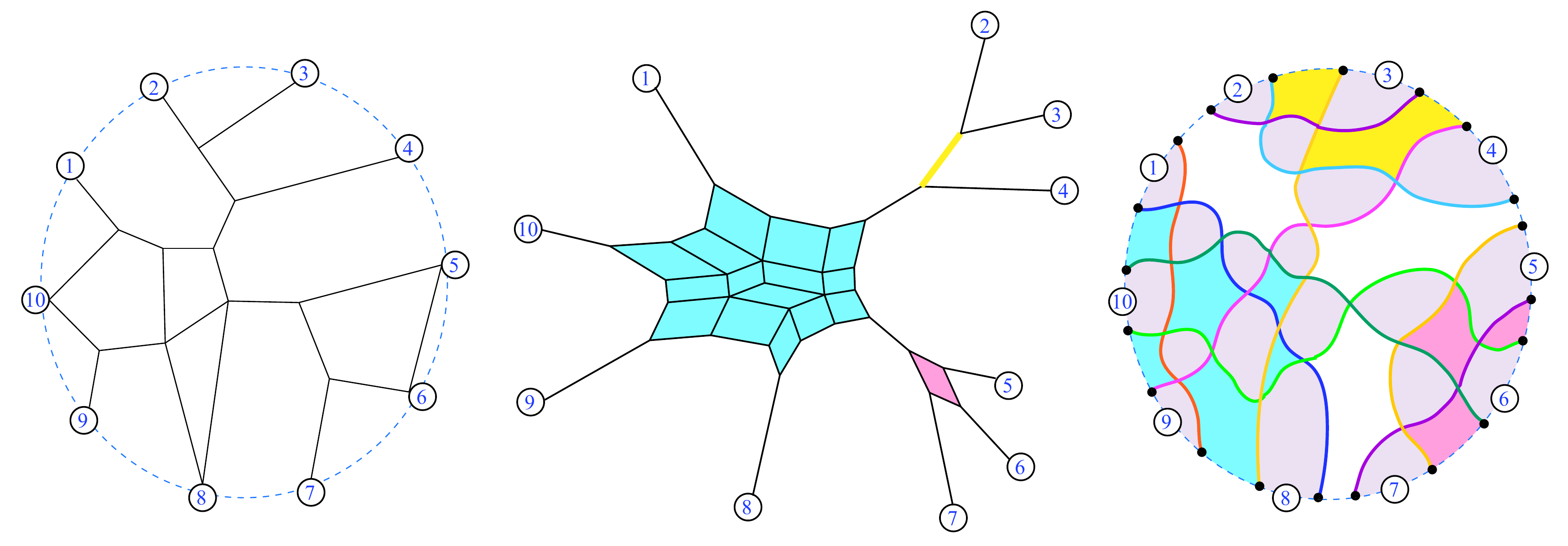}
    \caption{On the left, the graph of a circular planar network. Center, the graph of its associated split system: the new invariant of the network. At right, the strand matching diagram corresponding to the network; compare to Example~\ref{big}.}
    \label{covernet}
\end{figure}


\section{Introduction}

 Suppose that we are given a black box containing a connected electrical network $N$, made of many tangled wires. On the surface of the box are $n$ exposed terminals, and we can test these by applying voltage to them. We apply a voltage of 1 to each of the terminals in turn, each time using the remaining $n-1$ terminals to complete the circuit in parallel. We record the results as a matrix $M.$ This  \emph{response matrix} has entries $M_{i,j}$ equalling the current at terminal $j$ when  the unit voltage (our battery) is applied to terminal $i$,
and voltage of 0 (grounding) to all other terminals simultaneously. Alternatively, we could use an ohmmeter and test the effective resistance (impedance) between pairs of our terminals. We record these results as the \emph{resistance matrix} $W$, where $W_{i,j}$ is the effective resistance between terminal $i$ and $j.$ For a single wire, the conductance is the reciprocal of the resistance. For the entire circuit the formulas relating $M$ and $W$ are more complicated: see section~\ref{formulas}.\\

The inverse problem for electrical networks is to try to reconstruct a network $N$ using the response matrix $M$ on a set of terminals labeled $\{1,\dots,n\} = [n].$ Many solutions are typically possible. In their monograph \cite{curtisbook}, Curtis and Morrow completely solve the inverse problem for circular planar graphs with a given circular order: if a response matrix $M$ obeys the condition of non-negative circular minors, then they show how to reconstruct both a graph of a planar network and the conductances of the edges of that network that provide the desired $M$. In \cite{dorfler} Dorfler and Bullo show a  simple solution when the network $N$ is connected: regardless of the actual edge structure of $N$, the matrix $M$ will have the form of a weighted Laplacian of a weighted graph $K(N)$ on the vertex set $[n].$ This graph is made of cliques, and is indeed a network which has the response matrix $M.$   
\\

Meanwhile, recent work in phylogenetics  has focused on very similar reconstruction problems, as seen in \cite{huber2021} and \cite{frontiers}. Phylogenetic networks are combinatorially exactly the same objects as 
electrical networks, although the edge weights are decidedly less well understood. The main difference in the two fields has been, historically, that electric networks are weighted with conductance while phylogenetic networks are weighted with statistical distance metrics. However there is a simple bridge between the paradigms: the genetic distance between two extant taxa (species or individuals) can be  analogous to the resistance between two exposed terminals. 
In papers such as \cite{huber2021} and \cite{durell} the theoretical distance is assumed to be the length of the shortest path, length of the longest path, or a multiset of lengths of paths. In general, the resistance distance captures more information than any (multi)set of path lengths. In \cite{frontiers} the authors show how traditional distances measuring genetic distance, such as the Jukes-Cantor distance, can be seen as resistance distances if the recombination events obey certain expectations. Under those severe assumptions our results here allow the reconstruction of phylogenetic networks as well as electrical. However, there is much work to be done to clarify when such assumptions are justified. \\

Since the work of Kron it has been known that response matrices $M$ are Schur complements of the weighted Laplacian of an underlying graph of $N$ using conductances as weights \cite{kron4, kron3, kron2, kron1}. Curtis, Ingerman, and Morrow gave a complete characterization of the response matrices for networks that can be embedded in a plane with the terminals $\{1,\dots,n\}$ in counting order around their exterior \cite{curtis1, curtisbook}. These circular planar networks are the ones we focus on in this paper. If the desired circular order is not predetermined, there is a brute-force method to look for it:  reordering the terminals in all possible circular orders, rearranging the entries of the response matrix to correspond, and rechecking the circular minors for non-negativity. The first consequence of our new result is that fast algorithms for Kalmanson metrics can replace that brute-force process to quickly recover a candidate circular order for planarity, or reveal that it does not exist. Further, our results show that when measured genetic distances are found to be Kalmanson, and their response matrix is mathematically electrical, we can use Curtis and Morrow's algorithms on subnetworks to completely reconstruct the phylogenetic network. \\

Kenyon and Wilson in \cite{kenyon} recently described an alternate algorithm for finding the conductances of the edges of a standard network $N$ for each equivalence class of circular planar networks, if the underlying graph of $N$ is known. They mention the open question of finding the underlying graph in a more efficient way than exhibited by Curtis and Morrow. Here we show that in simple cases the underlying graph of $N$ is recoverable quickly from the response matrix $M$, using  mathematics that was developed in the field of phylogenetics. Our main theorem allows us to recover some basic features (circular planar ordering, bridges) of the graph of $N$ for all $n,$ and for some cases we can refine that recovery to find the complete structure. Combining our methods with the algorithm of Curtis and Morrow allows for faster graph reconstruction. In the other direction, our results show that complete phylogenetic network reconstruction is possible using Curtis and Morrow's algorithm. 
In the phylogenetic reconstruction problem planarity is not always a concern. However, asking for planarity can be seen as a way to insist on parsimony, as planar networks are the next step up in complexity from trees.  Indeed 1-nested and 2-nested phylogenetic networks, as defined in \cite{gambette-huber}, are both circular planar. 

\subsection{Acknowledgements}

Thanks to Satyan Devadoss for taking the time for many, many  conversations, and to Richard Kenyon, Thomas Lam and Jim Stasheff for answering lots of questions.

\section{Outline and Results}

In Section~\ref{nec} we go over the definitions and prior results that are needed to make this paper self-contained. Then in Section~\ref{splitsec} we prove our main result: Theorem~\ref{bigth} says that a response matrix for any circular planar electrical network corresponds uniquely to a Kalmanson metric (the resistance matrix), which in turn corresponds uniquely to a circular split system. This result settles a conjecture from \cite{frontiers}. A further conjecture from \cite{frontiers} is shown not to hold in Theorem~\ref{sym}. Practical implications of the main theorem are immediate: see Theorems~\ref{brij}  and~\ref{brijj}. We use those results to solve inverse problems in Section~\ref{recon}.  We can take a response matrix, test to see if it could have arisen from a circular planar network, and then find its circular split system.  The split system, visualized as a split network, yields a first look at the bridge structure of the unknown network.  Inside the bridge-free portions of the network we can reconstruct the local graphs---isolating the bridge-free portions allows us to reduce the big problem to several smaller ones. In Section~\ref{new} we discuss the spaces of networks and some open questions.  

\section{Preliminaries}\label{nec}
 
Most of the material in this section is an abbreviated review of concepts covered in two monographs about circular planar electrical and phylogenetic networks, respectively  by Curtis and Morrow \cite{curtisbook} and Steel \cite{steelphyl}. See those for complete definitions and many more theorems. We also make connections with the terminology from more recent papers, such as \cite{bapat2}, \cite{dorfler}, and \cite{huber2021}. 

 Abstractly, an \emph{electrical network} is a  graph $N$ with $m$ vertices, with $n$ of those vertices labeled by $[n] = \{1,\dots,n\}$ and called the \emph{boundary} nodes, or terminals, and the remaining $m-n$ vertices labeled by $\{n+1,\dots,m\}$ called interior nodes.  The edges of $N$ are usually given non-negative weights (often positive) that represent conductance. When the boundary nodes are labeled by $[n]$ in a clockwise circle (usually in counting order, but in general we may use an arbitrary circular order), and the graph $N$ can be drawn in the disk bounded by that circle with no crossed edges, we call $N$ a \emph{circular planar electrical network}. In this case the interior nodes can be left unlabeled, or labeled arbitrarily if that is handy. 
 
 A \emph{phylogenetic network} is a  graph $N$ with $m$ vertices, with $n$ of those vertices labeled by $[n] = \{1,\dots,n\}$ and called the \emph{leaf} nodes, or taxa, and the remaining $m-n$ vertices labeled by $\{n+1,\dots,m\}$ called interior nodes.  The edges of $N$ are usually given non-negative weights (often positive) that represent genetic mutation distance. When the leaf nodes are labeled by $[n]$ in a clockwise circle (usually in counting order, but in general we may use an arbitrary circular order), and the graph $N$ can be drawn in the disk bounded by that circle with no crossed edges, we call $N$ a \emph{circular planar phylogenetic network}. In this case the interior nodes can be left unlabeled, or labeled arbitrarily if that is handy. Sometimes there are two simplifying requirements: that the leaves be degree 1 and that there be no degree 2 nodes.
 
 Notice that the definitions of electrical and phylogenetic circular planar networks differ only in semantics. Another difference in their traditional treatment is that circular planar electrical networks are usually considered to be defined as having a given circular order of terminals, typically clockwise in counting order. This makes physical sense, for instance if the network will be printed on the surface of a circuit board. In contrast, a phylogenetic network $N$ will not be constructed in physical space, so it is considered to be planar with respect to a set of possible circular orders found by twisting $N$ around cut-vertices and bridges of the network, rotating, and flipping. All such circular orders $c$ of $[n]$ that allow the network to be drawn in the plane are called \emph{consistent} with $N.$ All such re-drawings of the phylogenetic network are equivalent. We can easily extend such freedoms to an electrical network; $N$ can be circular planar with respect to some circular orderings $c$ but not with respect to others. We will say that $c$ is \emph{consistent} with electrical $N$ if $N$ is circular planar with its terminals in the circular order $c.$
 
 The (weighted) graph Laplacian $L(N)$ is an $m\times m$ symmetric matrix. Off-diagonal entry $L_{ij}$ is the conductance of the edge $\{i,j\}$ or 0 if there is no such edge. Diagonal entry $L_{ii}$ is the negative value which makes the row and column both sum to zero, that is, $L_{ii}$ is the negative of the sum of the edges adjacent to vertex $i$. (This sign convention is sometimes reversed, as in \cite{curtisbook}
where only the diagonal entries are positive.) \\

    \begin{figure}[h]
     \centering
     \includegraphics[width=\textwidth]{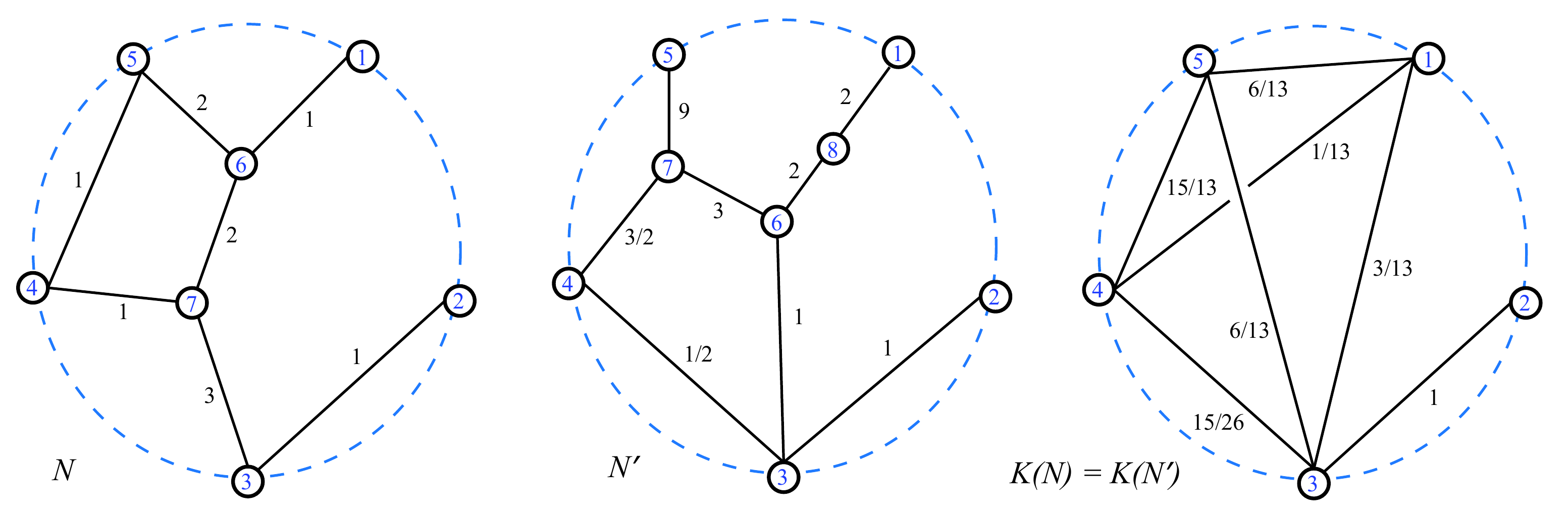}
     \caption{The circular planar electrical network on the left is equivalent to the one in the center, $N\sim N'.$ Their shared Kron reduction is on the right. }
     \label{exrun1pt5}
 \end{figure}
 
 \subsection{Matrix formulas}\label{formulas}
  The \emph{Kron reduction} of a graph Laplacian $L(N)$, given a choice of nodes $[n]$ to be the boundary (leaves), is the Schur complement with respect to the non-boundary nodes. That is, letting $A$ be the submatrix of $L$ using rows and columns $1,\dots,n$, letting $B$ be the submatrix of $L$ using rows $1,\dots,n$ and columns $n+1,\dots,m$, and letting $C$ be the submatrix of $L$ using rows and columns $n+1,\dots,m$, then 
 $$
 M(N)=A-BC^{-1}B^T
 $$
 Note that the response matrix of any network $N$ whose nodes are all selected as boundary nodes is the same as $L(N)$, the Laplacian.  The Kron reduction with respect to non-boundary nodes is precisely the response matrix $M(N)$ of the network, and has also been termed the Dirichlet-to-Neumann map, \cite{kenyon} and \cite{verdiere1}.  Two electrical networks with boundary nodes $[n]$ are \emph{electrically equivalent}, $N\sim N'$, when they have the same response matrix: $M(N)= M(N')$.

 The resistance matrix for a network $N$ whose nodes are all selected as boundary nodes is called  $R(N)$, also known as the resistance metric. Note that Kron reduction (using a Schur complement) of $L(N)$ corresponds to simply restricting $R(N)$, that is, finding the submatrix $W$ of $R$ for the new set of boundary nodes. 
 
For $N$ with boundary nodes $[n]$, there is a well-known one-to-one mapping between the response matrix  $M=M(N)$ of a network and the corresponding resistance distance matrix  $W=W(N)$. The following formulas are found in Lemma 3.11 of \cite{dorfler}, and also (in variant forms) in the studies of resistance metrics on graphs, in \cite{klein93}, \cite{bapat1}, \cite{bapat2}.
\begin{lemma}
Let $X^{\dagger}$ denote the pseudoinverse of $X.$ Let $X_D$ denote the diagonal matrix formed from $X$ by keeping only the main diagonal of $X$ (setting off-diagonal entries to zero). Let $J$ be the $n\times n$ matrix whose entries are all 1.  Then, given $M$ we find  $W:$\end{lemma}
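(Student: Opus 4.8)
The plan is to reduce the identity to the classical formula expressing effective resistance through the Moore--Penrose pseudoinverse of a graph Laplacian. The key structural fact is that the response matrix $M$ is itself a weighted graph Laplacian on the vertex set $[n]$, in the sign convention fixed above (positive off-diagonal entries, negative diagonal, zero row and column sums): the Schur complement of such a matrix with respect to the interior block again has vanishing row and column sums and the correct off-diagonal signs, so Kron reduction carries Laplacians to Laplacians, exactly as recorded for the clique-graph $K(N)$ in the introduction. Since, as noted above, Kron reduction simply restricts $R(N)$ to the boundary set, $W$ is precisely the resistance metric of the connected network whose Laplacian is $M$. It therefore suffices to prove the formula when $M$ itself is the full Laplacian of a connected network on $[n]$ with resistance matrix $W$.

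With that reduction, I would invoke the standard description of effective resistance as a quadratic form in the Laplacian pseudoinverse. The resistance between terminals $i$ and $j$ is obtained by solving the discrete Dirichlet problem for the nodal potentials under a unit current injected at one terminal and drawn off at the other. Because $M$ is symmetric with one-dimensional kernel spanned by the all-ones vector (connectedness) and $e_i - e_j$ is orthogonal to that vector, where $e_i$ is the $i$th standard basis vector, the minimal-norm potential is $M^{\dagger}(e_i - e_j)$, and the resulting potential difference gives $W_{ij} = \pm (e_i - e_j)^{T} M^{\dagger}(e_i - e_j)$, with the overall sign fixed by the negative-diagonal convention so that $W_{ij}\ge 0$. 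Using symmetry of $M^{\dagger}$ and expanding the quadratic form yields the entrywise identity $W_{ij} = 2\,(M^{\dagger})_{ij} - (M^{\dagger})_{ii} - (M^{\dagger})_{jj}$.

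The remaining step is purely notational. Writing $(M^{\dagger})_D$ for the diagonal part of $M^{\dagger}$ and $J$ for the all-ones matrix, one checks at once that $\bigl(J\,(M^{\dagger})_D\bigr)_{ij} = (M^{\dagger})_{jj}$ and $\bigl((M^{\dagger})_D\,J\bigr)_{ij} = (M^{\dagger})_{ii}$, so the entrywise formula repackages as $W = 2\,M^{\dagger} - J\,(M^{\dagger})_D - (M^{\dagger})_D\,J$. In the opposite, positive-semidefinite convention of \cite{dorfler} the three signs are simply reversed, giving the more familiar $W = (M^{\dagger})_D\,J + J\,(M^{\dagger})_D - 2\,M^{\dagger}$.

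The hard part will not be the algebra but the careful justification of the pseudoinverse formula, and in particular pinning down the global sign under the paper's convention. One must confirm that $M$ is semidefinite with kernel exactly the span of the all-ones vector, so that $M^{\dagger}$ is well defined and acts as a genuine inverse on the orthogonal complement in which $e_i - e_j$ lives; only then is the quadratic form independent of the choice of generalized inverse and equal to the physical resistance. These are precisely the points treated in \cite{klein93} and \cite{bapat2}, which I would cite for the detailed verification rather than reproving them here.
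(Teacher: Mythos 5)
Your argument is correct and is essentially the proof the paper intends: the paper states this lemma without proof, deferring to Lemma 3.11 of \cite{dorfler} and to \cite{klein93}, \cite{bapat1}, \cite{bapat2}, and what you have written out is precisely the standard derivation in those references --- the entrywise identity $W_{ij}=(L^{\dagger})_{ii}+(L^{\dagger})_{jj}-2(L^{\dagger})_{ij}$ for the positive semidefinite Laplacian $L=-M$, repackaged via $(X_D J)_{ij}=X_{ii}$ and $(JX_D)_{ij}=X_{jj}$, with your sign bookkeeping consistent since $(-M)^{\dagger}=-M^{\dagger}$. The only point worth flagging is that, like the paper, you ultimately cite rather than prove the two load-bearing facts (that $M$ is negative semidefinite with kernel spanned by the all-ones vector when the network is connected, and that the quadratic form in the pseudoinverse computes physical effective resistance), so the proposal is a faithful expansion of the paper's citation rather than a new route.
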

$$W(M) = \left((-M)^{\dagger}\right)_DJ+J\left((-M)^{\dagger}\right)_D-2(-M)^{\dagger} $$
and given $W$ we find $M:$
$$M(W) = \Bigg(\frac{1}{2}\left(W-\frac{1}{n}(WJ+JW)+\frac{\text{trace}(WJ)}{n^2}J\right)\Bigg)^{\dagger}.$$

Note that $L=L(N)$ and $R=R(N)$ are related by the same formulas are $M$ and $W$, since $L$ and $R$ are the response and resistance matrices for the case when all nodes are considered to be terminals. 

\begin{example}\label{mats}
In Figure~\ref{exrun1pt5} we show a circular planar electrical network $N.$  Here are the matrices calculated from $N$:
$$
L(N)=\begin{bmatrix}
-1 &  0 &  0 &  0  & 0  & 1 &  0\\
0 &  -1 &  1 &  0 &  0 &  0 &  0\\
0 &  1 &  -4 &  0 &  0 &  0 &  3\\
0 &  0 &  0 &  -2 &  1 &  0 &  1\\
0 &  0 &  0 &  1 &  -3 &  2 &  0\\
1 &  0 &  0 &  0 &  2 &  -5 &  2\\
0 &  0 &  3 &  1 &  0 &  2 &  -6
\end{bmatrix}
~
R(N) = \begin{bmatrix}
     0 & 11/4 & 7/4 & 7/4 & 17/12 & 1 & 17/12\\    
      11/4 & 0 &  1 &  2 &  2 &  7/4 & 4/3\\     
       7/4 & 1 &  0 &  1 &  1 &  3/4 & 1/3 \\    
       7/4 & 2 &  1 &  0 &  2/3 & 3/4 & 2/3 \\    
      17/12 & 2 &  1 &  2/3 & 0 &  5/12 & 2/3  \\   
       1 &  7/4 & 3/4 & 3/4 & 5/12 & 0 &  5/12 \\   
      17/12 & 4/3 & 1/3 & 2/3 & 2/3 & 5/12 & 0
\end{bmatrix}
$$

  $$
  A=\begin{bmatrix}
-1 &  0 &  0 &  0 &  0\\
0 &  -1 &  1 &  0 &  0\\
0 &  1 &  -4 &  0 &  0\\
0 &  0 &  0 &  -2 &  1\\
0 &  0 &  0 &  1 &  -3
\end{bmatrix}
 ~
 B=
 \begin{bmatrix}
1 &  0\\
 0 &  0\\
 0 &  3\\
 0 &  1\\
 2 &  0
 \end{bmatrix}
 ~
 C=
 \begin{bmatrix}
 -5 &  2\\
2 &  -6
 \end{bmatrix}
  $$
  
$$
M(N) = \begin{bmatrix}
     -10/13 & 0 & 3/13 & 1/13 & 6/13\\
       0 & -1 & 1 & 0 & 0\\
       3/13 & 1 & -59/26 & 15/26 & 6/13\\
       1/13 & 0 & 15/26 & -47/26 & 15/13\\
       6/13 & 0 & 6/13 & 15/13 & -27/13
 \end{bmatrix}
~
W(N) =\begin{bmatrix}
    0 & 11/4 & 7/4 & 7/4 & 17/12\\
      11/4 & 0 & 1 & 2 & 2\\
       7/4 & 1 & 0 & 1 & 1\\
       7/4 & 2 & 1 & 0 & 2/3\\
      17/12 & 2 & 1 & 2/3 & 0
\end{bmatrix}
$$
\end{example}
 
 The response matrix $M$ is also a weighted graph Laplacian, for a new graph of $n$ nodes. This new weighted graph is called the  Kron reduced network $K(N),$ and it is described in Theorem 3.4 of \cite{dorfler}. The only vertices of $K(N)$ are the boundary nodes $[n].$ Edges between boundary nodes in $N$ are repeated in $K(N)$. In general there is an edge $\{i,j\}$ in $K(N)$ if and only if those boundary nodes are connected by a  path in $N$ not containing any other boundary nodes. Figure~\ref{exrun1pt5} shows $N$ on the left and its Kron reduced network  $K(N)$ on the right.\\

\subsection{Connections and minors} For the following concepts, see Example~\ref{circ}. A \emph{circular pair} of $[n]$ is a pair of disjoint ordered lists $(P,Q)= (p_1,\dots,p_k;q_1,\dots,q_k)$ of elements of $[n]$,  such that  we can write the list $P$, followed by the list $Q$ in reverse $(p_1,\dots,p_k,q_k,\dots,q_1)$, as a concatenated list of length $2k$ that respects the circular order, without looping.  That is, the two sets are \emph{non-interlaced}, or \emph{non-crossing} on the circle.  In the $n\times n$ response matrix $M$ a \emph{circular submatrix} of size $k$ associated to a circular pair $(P,Q)$ is the $k\times k$ matrix made by selecting the rows of $M$ listed in $P$, and then the columns of $M$ listed in $Q$. Rows and columns of the submatrix keep the respective original orders given by the two lists in $(P,Q)$.

The \emph{circular minor} of $M(N)$ associated to the circular pair $(P,Q)$ of size $k$ is the determinant of the circular submatrix. We denote it by $\text{det}\, M(P,Q).$ Note that in \cite{curtis0} their matrix $M$ is the negative of ours (their off-diagonal entries are negative) so they multiply their determinant by $(-1)^k$, while we do not. \\ 

The following is Theorem 3 by Curtis, Ingerman, and Morrow  in \cite{curtis2}.\\

\begin{thm}
A response matrix $M$ for an electrical network $N$ has all non-negative circular minors if and only if  $N$ is circular planar.
\end{thm}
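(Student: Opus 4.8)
The plan is to prove the two implications separately, with the combinatorial notion of a \emph{connection} serving as the bridge between the algebra of circular minors and the geometry of planarity. By a connection from an ordered boundary list $P=(p_1,\dots,p_k)$ to an ordered list $Q=(q_1,\dots,q_k)$ I mean a family of $k$ pairwise vertex-disjoint paths in $N$, each joining some $p_i$ to some $q_j$ and meeting the boundary only at its endpoints. The engine of the whole argument is a determinantal identity expressing each circular minor $\det M(P,Q)$ as a weighted, signed sum over such connections; this is the electrical analogue of the Lindstr\"om--Gessel--Viennot lemma, and I would derive it by combining the Schur-complement definition $M=A-BC^{-1}B^{T}$ with the matrix-tree theorem (so each path family is weighted by a positive product of conductances and spanning-forest weights) and a Cauchy--Binet expansion.

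\emph{Forward direction.} First I would assume $N$ is circular planar and fix a non-interlaced circular pair $(P,Q)$. In the signed sum over connections, each connection realizes a matching $p_i \leftrightarrow q_{\sigma(i)}$ by vertex-disjoint arcs drawn inside the disk. The key planarity lemma is that, because $P$ and $Q$ are non-crossing on the boundary circle, the only permutation $\sigma$ admitting a non-crossing vertex-disjoint realization is the planar matching $p_i \leftrightarrow q_i$; any other $\sigma$ would force two arcs to cross, which is impossible in the disk. Hence all surviving terms carry the same sign, and since conductances and forest weights are positive the minor is a sum of non-negative quantities, giving $\det M(P,Q)\ge 0$.

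\emph{Backward direction.} This is the substantive half, and I would argue constructively by induction on the number of edges, recovering a planar network from $M$. The first ingredient is a \emph{rank--connection} lemma: for any circular pair, $\operatorname{rank} M(P,Q)$ equals the maximum size of a connection from $P$ to $Q$, which I would again read off from the determinantal identity together with the hypothesis that no cancellation occurs. Using this, I would detect a \emph{boundary edge} or \emph{boundary spike}---a position where a suitable corner minor behaves like a single conductor at the boundary---peel it off by the electrical operations of deleting a boundary edge or contracting a boundary spike (which act on $M$ in a controlled, minor-preserving way), verify that the reduced matrix still satisfies the non-negativity hypothesis, and induct. Reassembling the peeled pieces yields a circular planar $N$ with response matrix $M$.

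\emph{Main obstacle.} The hard part lies entirely in the backward direction: proving that non-negativity of \emph{all} circular minors is exactly strong enough to guarantee that a boundary edge or spike can always be identified and removed while the invariant is maintained, and to handle the degenerate cases where many minors vanish simultaneously. Making the rank--connection lemma precise in the presence of zero minors, and showing the induction never stalls, is where the real difficulty sits; this is the content Curtis, Ingerman and Morrow establish through their medial and wiring-diagram analysis, and I would expect to lean on their lens-free wiring-diagram description to certify planarity of the reconstructed graph.
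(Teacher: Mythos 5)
This theorem is not proved in the paper at all: it is quoted verbatim as Theorem~3 of Curtis, Ingerman and Morrow \cite{curtis2}, so there is no in-paper argument to compare against. Measured against the actual proof in \cite{curtis2} and \cite{curtisbook}, your outline reproduces its architecture faithfully: the forward direction via the determinantal identity expressing $\det M(P,Q)$ (times the positive determinant of the interior block $C$) as a signed sum over $k$-connections weighted by spanning-forest data, with planarity of the disk forcing the unique non-crossing matching $p_i\leftrightarrow q_i$ and hence a single sign; and the backward direction via an inductive recovery that peels boundary edges and spikes, controlled by the rank--connection principle and certified by the lens-free medial/wiring-diagram analysis. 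Two small cautions. First, your forward direction should track the sign convention explicitly: the paper's $M$ has non-negative off-diagonal entries, whereas \cite{curtis0} uses the negative and inserts a factor $(-1)^k$, so ``all terms have the same sign'' needs a half-line to become ``that sign is $+$.'' Second, your backward direction produces \emph{some} circular planar network with response matrix $M$, so the conclusion ``$N$ is circular planar'' must be read up to electrical equivalence, as the paper does throughout; and as you acknowledge, that half of your write-up is a plan rather than a proof --- the existence and non-stalling of the boundary edge/spike to peel, especially when many minors vanish, is precisely the content of the Curtis--Ingerman--Morrow machinery you defer to. As a blind reconstruction of the cited proof's strategy, it is accurate; as a self-contained proof, only the forward direction is close to complete.
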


A \emph{k-connection} associated to a circular pair $(P,Q)$ of a circular planar electrical network $N$ is a set of $k$ non-intersecting interior paths in $N$ from the $k$ terminals $p_i$ listed in $P$ to the corresponding $k$ terminals $q_i$ listed in $Q.$ That is, each of the $k$ paths has only interior nodes (except for that path's terminal endpoints) and no interior node is in more than one of the paths.

\begin{lemma}
For a circular planar electrical network $N$, a circular minor of $M(N)$ associated to the circular pair $(P,Q)$  of size $k$ obeys $\text{det}\, M(P,Q) > 0$ if and only if there exists in (planar) $N$ a $k$-connection for $(P,Q).$
\end{lemma}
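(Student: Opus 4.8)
The plan is to reduce the statement to a single determinantal identity: I want to exhibit a formula expressing $\det M(P,Q)$ as a sum, with one common sign, over the $k$-connections for $(P,Q)$, each weighted by a positive product of conductances (possibly dressed by auxiliary spanning-forest factors). Once such a formula is in hand the equivalence is immediate, since a nonempty sum of positive terms is positive and an empty one is zero.

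First I would return from the response matrix to the Laplacian. Writing $L=L(N)$ in the block form of Section~\ref{formulas} and letting $I=\{n+1,\dots,m\}$ be the interior index set, the Jacobi-type identity for minors of a Schur complement gives
$$\det M(P,Q)=\frac{\det L(P\cup I,\,Q\cup I)}{\det L(I,I)}.$$
Because this paper's Laplacian satisfies $L=-\tilde L$, with $\tilde L$ the usual positive-diagonal Laplacian, both determinants are, up to explicit powers of $-1$, minors of $\tilde L$; the signs combine into a single factor $(-1)^k$, consistent with the sign convention noted before the theorem. The denominator $\det\tilde L(I,I)$ is a principal minor of a grounded Laplacian, hence by the matrix-tree theorem a strictly positive sum over spanning forests of $N$ whose trees each meet the boundary once; it is a nonzero quantity of fixed sign and may be set aside.

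The heart of the matter is the numerator, which is $\tilde L$ with the rows $[n]\setminus P$ and the columns $[n]\setminus Q$ deleted. Expanding it by the all-minors (Chaiken) matrix-tree theorem produces a signed sum over spanning forests of $N$ with $n-k$ trees, each tree carrying exactly one deleted-row vertex and one deleted-column vertex. Since every boundary vertex is deleted from a row, a column, or both, while interior vertices are deleted from neither, the mark bookkeeping forces each forest to split into $n-2k$ single-rooted grounding trees (one at each boundary node outside $P\cup Q$) together with $k$ connection trees, each joining one node of $P$ to one node of $Q$ through interior vertices and otherwise decorated by attached interior subtrees. The skeleton of such a forest is thus precisely a $k$-connection for $(P,Q)$; conversely, using only that $N$ is connected, any $k$-connection extends to at least one such forest (leftover vertices are absorbed as decorations or as singleton grounding trees). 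Hence the support of the sum is nonempty exactly when a $k$-connection exists.

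The main obstacle is the sign. Chaiken's theorem attaches to each forest the sign of the permutation $\sigma_F$ matching deleted-row to deleted-column vertices within trees, i.e.\ the matching of $Q$ to $P$ realized by the connection skeleton. Here planarity does the work: since $(P,Q)$ is a circular pair, the terminals occur around the boundary in the non-interlaced order $p_1,\dots,p_k,q_k,\dots,q_1$, and a Jordan-curve argument shows that in a planar network any family of $k$ vertex-disjoint interior paths must realize the unique non-crossing matching $p_i\leftrightarrow q_i$, since a crossing matching would force two paths to share an interior vertex. Therefore $\sigma_F$ is one and the same permutation for every surviving forest, all terms carry a common sign, and
$$\det M(P,Q)=(\pm1)\sum_{k\text{-connections}}(\text{positive weight}).$$
To pin the sign to $+$ I would invoke the Curtis--Ingerman--Morrow theorem quoted above: for circular planar $N$ every circular minor is nonnegative, which is consistent with the displayed formula only if the common sign is $+$ (the factor $(-1)^k$ already absorbed by our convention); alternatively one computes $\operatorname{sgn}\sigma_F$ and Chaiken's global sign directly. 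With the sign fixed, $\det M(P,Q)$ is a sum of strictly positive terms indexed by the $k$-connections for $(P,Q)$, so it is positive when such a connection exists and zero otherwise, as claimed.
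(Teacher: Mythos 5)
This lemma is imported background: the paper quotes it in Section~\ref{nec} as a result of Curtis, Ingerman and Morrow and gives no proof of its own, so there is no in-paper argument to compare against; what you have written is essentially a reconstruction of the standard proof from the cited source \cite{curtisbook}. Your outline is correct in all essentials: the quotient formula $\det M(P,Q)=\det L(P\cup I,Q\cup I)/\det L(I,I)$ for minors of a Schur complement; the all-minors (Chaiken) expansion of the numerator into spanning forests that necessarily decompose into $n-2k$ grounding trees rooted at the boundary vertices outside $P\cup Q$ plus $k$ trees each joining one vertex of $P$ to one of $Q$ through interior vertices, whose skeleton paths form exactly a $k$-connection; the Jordan-curve argument that vertex-disjoint paths in the disk between the non-interlaced sets $P$ and $Q$ can only realize the non-crossing matching $p_i\leftrightarrow q_i$, so every surviving forest carries the same permutation sign; and the extension of any $k$-connection to at least one admissible forest using connectedness of $N$. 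Two points deserve to be made explicit rather than waved at. First, the sign: the circular submatrix orders rows by the list $P$ and columns by the list $Q$ (not necessarily increasing order), and Chaiken's global sign depends on the positions of the deleted indices; what saves you is that all of these data depend only on $(P,Q)$ and $n$, never on the forest or on the interior of $N$, so the common sign $\epsilon(P,Q)$ is a single constant per circular pair --- and to pin it to $+$ via the Curtis--Ingerman--Morrow nonnegativity theorem you must exhibit, for that pair, one circular planar network admitting a $k$-connection (e.g.\ $k$ disjoint chords plus enough boundary edges to be connected), which makes the sum nonempty and forces $\epsilon(P,Q)=+1$. Second, in the converse direction you should note that leftover interior vertices are absorbed one edge at a time into existing components so the component count stays at $n-k$; this is routine but it is precisely where connectedness of $N$ is used. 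With those two clarifications the argument is complete.
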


\begin{example}\label{circ}
The network
 $N$ in Figure~\ref{exrun1pt5} is circular planar, and $N\sim N'.$ The 1-connections are the 14 non-zero off-diagonal entries of $M$ and there are five 2-connections.  For instance, by inspection of the paths in the network from 1 to 5 and from 3 to 4, (1,3;5,4) is a 2-connection. As expected, its circular minor is non-zero: $$det\begin{bmatrix}
 6/13 & 1/13\\
 6/13 & 15/26
 \end{bmatrix} = 39/169.$$ In contrast (2,3;1,4) is not a 2-connection, reflected by the fact that its circular minor is  $$det\begin{bmatrix}
 0 & 0\\
 3/13 & 15/26
 \end{bmatrix} = 0.$$
\end{example}

\subsection{Invariants} The matrix $M(N)$ is by definition an algebraic invariant of electrical equivalence, and thus also is the matrix $W(N)$. There are several combinatorial invariants of electrical equivalence for circular planar electrical networks. The first three are as follows: if $N\sim N'$ then both have the same \emph{set of connections},  both have the same \emph{perfect matching} on $[2n],$ and both have the same set of \emph{minimal graphs}. Minimal graphs are called \emph{critical} in \cite{curtisbook}, and are defined to be such that deletion or contraction of any edge decreases the set of connections. In Figure~\ref{exrun1pt5}, the first network $N$ is minimal (critical), while the equivalent network $N'$ is not. In fact, for a given $n$ the three sets of images (of these three invariants) are in mutual bijection and detect the same electrical equivalences. Each minimal graph has the same number of edges, which also turns out to be the dimenension of the subspace of response matrices that map to it. More about minimal graphs, strand matchings, and spaces can be found in Theorem  2.3 of \cite{kenyon},  and also in \cite{verdiere2, curtisbook}.  We will use the following Example~\ref{match} to explain how to find the perfect matchings corresponding to a circular planar network.

\begin{example}\label{match}
In Figure~\ref{exrun3} the network $N$ is shown with its medial graph (dashed) and stubs $1,\dots,10$ (often labeled as $t_i$) numbered clockwise starting just counterclockwise of terminal 1. The \emph{strands} are formed as paths in the medial graph which do not turn left or right at the vertices of the medial graph. If pairs of strands cross each other at most once (they are called lens-free), then the network picture was minimal and the strands give the associated perfect matching: here we have the matching $\{\{1,7\},\{2,8\},\{3,5\},\{4,9\},\{6,10\}\}.$ Note: as shown in \cite{curtisbook}, if any pair of strands crosses each other more than once (exhibits a lens), then the original network was not critical. 

    \begin{figure}[h]
     \centering
     \includegraphics[width=\textwidth]{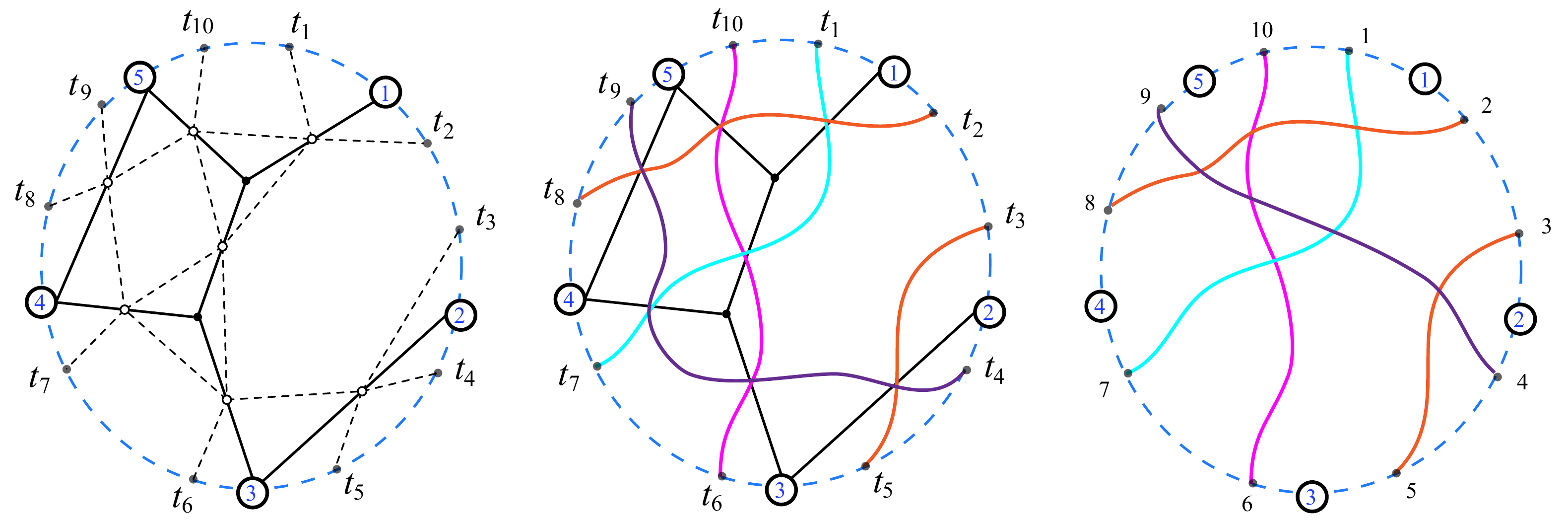}
     \caption{On the left the graph of the network $N$ from Figure~\ref{exrun1pt5} is shown with its medial graph dashed. In the center the medial graph has been colored to see the strands, and the lens-free matching via strands is shown on the right. }
     \label{exrun3}
 \end{figure}

\end{example}

A \emph{split} $A|B$ of $[n]$ is a partition of $[n]$ into two parts $A$ and $B$. A split is \emph{trivial} if either part has only one element. A \emph{split system} is any collection of splits of $[n]$. The elements of $[n]$ are often called the leaves, or the nodes, or the taxa or the terminals. Sometimes for convenience the trivial splits are excluded from systems (or required), here we allow their inclusion unless otherwise stated. A circular order of $[n]$  can be drawn as a polygon with the elements of $[n]$ labeling the $n$ sides. A \emph{circular split system} is a split system for which  a circular order exists such that all the splits can be simultaneously drawn as sides or diagonals of the labeled polygon. Trivial splits are sides of the polygon, separating the label of that side from the rest of $[n],$ while a non-trivial split $A|B$ is a diagonal separating the sides labeled by A and B. For any circular split system we can visualize it by such a polygonal representation, or instead choose a visual representation using sets of parallel edges for each split; these representations are called \emph{circular split networks.} A set of parallel edges \emph{displays} a split $A|B$ if the removal of those edges leaves two connected components with respective sets of  terminals $A$ and $B$. A \emph{bridge} is a single edge that displays a split. We sometimes refer to cutpoint-free portions of a network as \emph{blobs}.
See Example~\ref{splitter} for all these concepts illustrated.\\

Two (drawings of) split networks are considered equivalent if they display the same split system $s$. Different drawings may be found by twisting the graph around the cut-points or bridges, by rotating or flipping.  Again, any circular order $c$ of $[n]$ that allows such a drawing of the split network for $s$ (in either its graph form or the dual polygonal picture) is called consistent with $s$.

Non-negative real numbers, often required to be positive, are assigned to the splits of a split system to make a \emph{weighted split system}. Given a weighted split system $s$ on leaves $[n]$, the \emph{split metric} of $s$ is defined on the leaves $[n]$ by finding the distance $d(i,j)$ as the sum of the weights of splits separating leaf $i$ and $j$. In a split network representing $s$ by weights on the edges, the distance will be the minimal path weight, that is, the sum of the weights of splits traversed on a shortest path from $i$ to $j$.

A metric on $[n]$ can be written as a symmetric matrix $W$, with $W_{ij} = d(i,j).$  A metric is \emph{Kalmanson} if there exists a circular order of $[n]$ such that for any four nodes in circular order $(i,j,k,l)$ we have $$W_{ik}+W_{jl} \ge W_{ij}+W_{kl},$$ and  $$W_{ik}+W_{jl} \ge W_{jk}+W_{il}.$$

Kalmanson metrics were studied first in \cite{ken}, where it is shown that they allow fast solutions of the travelling salesman problem. Their use in phylogenetics is more recent, for more on the connection to circular split systems see  \cite{Pachter2}. The following theorem is exactly what we will need. It is  Proposition 6.10 of \cite{steelphyl}, and has a proof presented in \cite{kleinman}. 

\begin{thm}\label{kal}
A metric on $[n]$ is Kalmanson, with respect to a circular order $c$, if and only if it is the split metric for a unique circular weighted split system $s$. Furthermore (from the proof in \cite{kleinman}) the circular order $c$ is consistent with $s.$   \end{thm}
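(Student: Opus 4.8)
The plan is to prove the biconditional in both directions, using a circular order $c$ fixed throughout, and then to extract uniqueness. I would begin with the easier direction: suppose $W$ is the split metric of a circular weighted split system $s$ consistent with $c$. Take any four leaves $i,j,k,l$ appearing in this cyclic order around the polygon. Each split in $s$, being drawable as a chord (or side) of the $c$-polygon, partitions the four points in one of only a few combinatorially possible ways, and I would classify which splits contribute to each pairwise distance $W_{ab}$. The key observation is that a chord separating the four points into $\{i,j\}|\{k,l\}$ contributes to $W_{ik},W_{jk},W_{il},W_{jl}$ but not to $W_{ij}$ or $W_{kl}$; a chord giving $\{i,l\}|\{j,k\}$ contributes to the ``crossing'' pairs; and a chord giving $\{i,k\}|\{j,l\}$ (the fully crossing split) contributes to all four of the long diagonals. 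Tallying these contributions, each Kalmanson inequality reduces to the statement that a sum of nonnegative split weights is $\geq 0$, which is immediate. This establishes that every circular split metric is Kalmanson with respect to $c$.

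For the converse, suppose $W$ is Kalmanson with respect to $c$. The natural strategy is to \emph{construct} the weights of a circular split system and show they reproduce $W$. Relative to the order $c$, the candidate splits are exactly the diagonals and sides of the polygon, i.e. the splits $A|B$ where $A$ is a contiguous arc of $c$; there are $\binom{n}{2}$ of these. I would define the weight $\alpha_{A|B}$ of each such split by an explicit alternating-sum formula in the entries of $W$ indexed by the endpoints of the arc (the standard ``second difference'' expression, essentially $\tfrac{1}{2}(W_{i,j'}+W_{i',j}-W_{i,j}-W_{i',j'})$ for adjacent boundary indices delimiting the arc). The heart of the argument is then twofold: first, verify that each $\alpha_{A|B}\geq 0$, which is precisely where the two Kalmanson inequalities get used — they are designed so that exactly these second differences are nonnegative; and second, verify that $\sum_{A|B \text{ separates } i,j} \alpha_{A|B} = W_{ij}$, a telescoping identity that recovers the metric from the weights.

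I expect the main obstacle to be the telescoping/recovery step and the bookkeeping of which contiguous-arc splits separate a given pair $i,j$. One must set up the indexing of arcs carefully (say, arcs starting at position $a$ and ending at position $b$ in the cyclic order), show that the alternating sum defining $\alpha$ collapses correctly, and confirm that summing over all arcs straddling the pair $\{i,j\}$ leaves exactly $W_{ij}$ after cancellation. Boundary effects at the ``ends'' of arcs, and the treatment of trivial splits (sides), require care so that no double counting or sign error creeps in.

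Finally, for \emph{uniqueness}: I would argue that the weights are forced. The linear map sending a weight vector $(\alpha_{A|B})$ over contiguous-arc splits to its split metric $W$ is injective — the $\binom{n}{2}$ contiguous splits yield linearly independent split metrics, so the weights are uniquely determined by $W$, and any circular split system realizing $W$ must use only splits compatible with $c$ (a non-contiguous split cannot be drawn in the $c$-polygon), hence must coincide with the constructed one. The consistency claim ``$c$ is consistent with $s$'' then follows immediately, since by construction every split of $s$ with positive weight is a chord of the $c$-polygon, which is exactly what consistency of $c$ with $s$ means; I would cite \cite{kleinman} for the detailed verification of this last point as the statement promises.
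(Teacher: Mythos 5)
First, a point of comparison: the paper does not prove Theorem~\ref{kal} at all --- it is quoted as Proposition 6.10 of \cite{steelphyl} with the proof deferred to \cite{kleinman} --- so there is no in-paper argument to measure you against. Your outline follows the standard route in that literature (quartet case analysis for one direction, second differences / isolation indices for the other), and the skeleton is the right one, but as written the forward direction contains a step that is actually wrong. Among the possible restrictions of a chord of the $c$-polygon to a cyclically ordered quartet $(i,j,k,l)$ you list the partition $\{i,k\}|\{j,l\}$. No chord induces this partition: an arc of the circular order containing $i$ and $k$ but neither $j$ nor $l$ cannot exist, since $j$ and $l$ each lie on one of the two arcs between $i$ and $k$. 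Recognizing this impossibility is not a side remark; it is the entire content of the forward direction. A split inducing $\{i,k\}|\{j,l\}$ with weight $w$ would contribute $0+0-w-w=-2w$ to $W_{ik}+W_{jl}-W_{ij}-W_{kl}$, so if that case were admitted your claimed ``tally of nonnegative contributions'' would fail. The correct tally is: trivial and singleton restrictions contribute $0$ to both Kalmanson differences, $\{i,j\}|\{k,l\}$ contributes $+2w$ to the first and $0$ to the second, $\{j,k\}|\{i,l\}$ contributes $0$ to the first and $+2w$ to the second, and the crossing case never occurs.

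The second gap is in uniqueness and the consistency clause. You assert that any circular split system realizing $W$ must use only splits compatible with $c$ because ``a non-contiguous split cannot be drawn in the $c$-polygon.'' This conflates \emph{circular with respect to $c$} with \emph{circular with respect to some order}: a competing weighted system could a priori be circular for a different order $c'$ and contain splits that are not $c$-arcs, and nothing in your argument excludes that. Linear independence of the $\binom{n}{2}$ split metrics of $c$-arc splits does hold (it is exactly the invertibility of your second-difference map) but it only gives uniqueness among $c$-circular systems. Closing the gap requires the Bandelt--Dress machinery: circular split systems are weakly compatible, the weight of each split in such a decomposition equals its isolation index, and the isolation index is computed from $W$ alone; this is where \cite{kleinman} gets both full uniqueness and the statement that $c$ is consistent with $s$. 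Your telescoping identity and the nonnegativity of the second differences are the right heart of the converse; just note that the trivial splits get their nonnegativity from the triangle inequality rather than from the two Kalmanson inequalities, and that the cross-order uniqueness needs an explicit argument rather than the one-line dismissal you give.
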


For a Kalmanson metric, the unique weighted circular split system can be found using various algorithms. We often choose the agglomerative algorithm Neighbor-Net, which is shown to return the exact unique weighted circular split system when the metric is Kalmanson. That algorithm also returns an approximate when the metric fails to be Kalamanson.  \cite{Bryant2007}.\\

A \emph{k-nested} circular planar electrical (or phylogenetic) network $N$ has edges that are part of at most $k$ cycles each. An example of a 1-nested networks is $N$ in Figure~\ref{exrun1pt5}; an example of a 2-nested network is $N$ in Figure~\ref{bridgy}. This concept is introduced for phylogenetic networks in  \cite{gambette-huber}. In \cite{frontiers} the authors show that a 1-nested network has Kalmanson resistance distance. In the next section we extend that theorem to all circular planar electrical networks. \\

\section{Split networks from electrical networks.}\label{splitsec}

 In this paper we introduce some new combinatorial invariants of electrical equivalence, based on the resistance matrix $W.$ Theorem~\ref{bigth} implies that for two circular planar networks if $N\sim N'$ then both have the same weighted split system, denoted $R_w(N) = R_w(N').$ The set of splits in that system is thus also an invariant:  if $N\sim N'$ then both give rise to the same set of splits. 
\begin{example}\label{splitter}

In Figure~\ref{exrun2} the network $N$ from Figure~\ref{exrun1pt5} is shown with resistances on edges. The weighted circular split network $R_w(N)$ is shown; the sum of weights on a shortest path between terminals $i$ and $j$ in $R_w(N)$ equals the effective resistance between those terminals in $N$, and thus the $i,j$ entry of $W(N)$, from above Example~\ref{mats}. There are 7 splits, as shown in the dual polygonal picture of the unweighted split network. For instance the split $\{1,5\}|\{2,3,4\}$ has weight 1/6, the trivial split $\{4\}|\{1,2,3,5\}$ has weight 1/3, and the bridge $\{1,4,5\}|\{2,3\}$ has weight 1/2. Note that the effective resistance from terminal 1 to terminal 5  in $N$ is found in several ways: $$W_{1,5}  = 17/12 \text{ (from the matrix formula in Section~\ref{formulas})}$$ $$
=1+ \frac{(1/2)(1+1+1/2)}{3} \text{ (using Kirchoff's and Ohm's laws on the diagram of $N$ in Figure~\ref{exrun2})}$$ $$
=1/6+1/6+13/12 \text{ (by summing the splits in the shortest path of the circular split network $R_w(N)$.)}$$
\end{example}
 
    \begin{figure}[h]
     \centering
     \includegraphics[width=\textwidth]{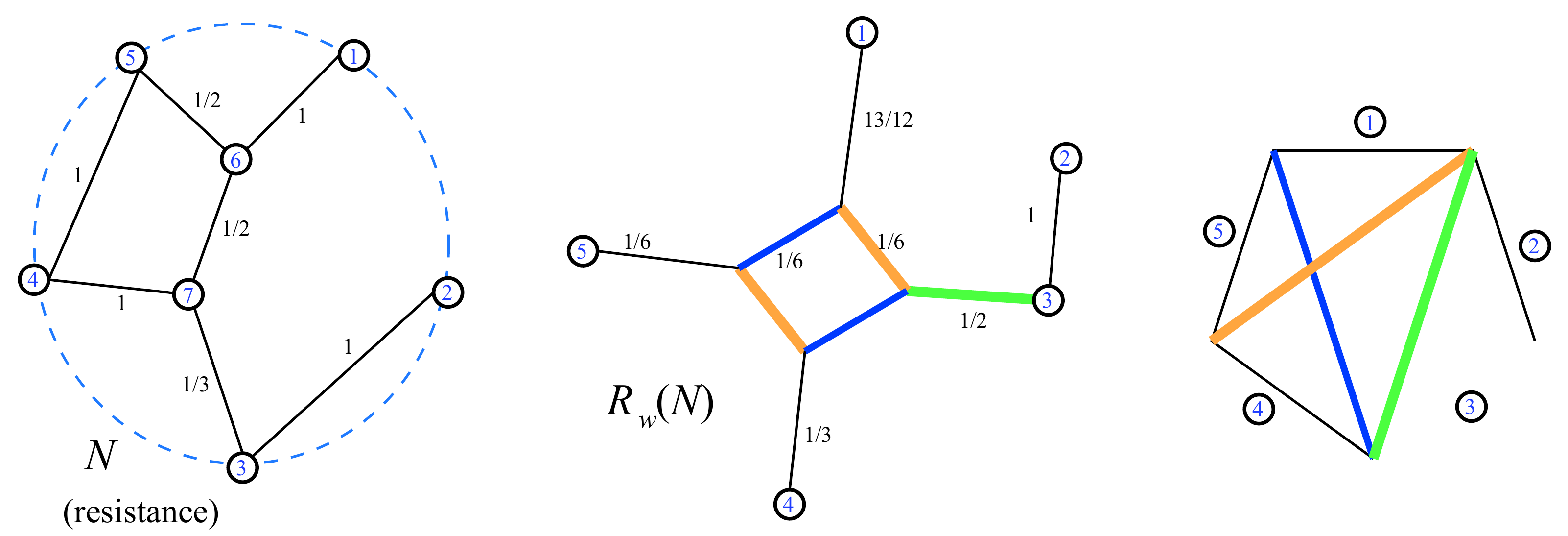}
     \caption{Left: the network $N$ from Figure~\ref{exrun1pt5}, but labeled by resistances. In the center is the corresponding weighted split network $R_w(N)$ found using the resistance matrix $W$. On the right is the dual polygonal picture of the splits. Note that there is no trivial split separating terminal 3 from all the rest.}
     \label{exrun2}
 \end{figure}

\begin{thm}\label{bigth}
If a symmetric matrix $M$ is a response matrix $M=M(N)$ for a connected circular planar network $N$ then the corresponding resistance matrix $W=W(N)$ obeys the Kalmanson condition. Thus each circular planar network (up to equivalence) corresponds to a unique weighted circular split system, denoted $R_w(N)$.

\end{thm}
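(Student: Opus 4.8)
The plan is to prove the Kalmanson inequalities for $W=W(N)$ directly from a spanning-forest formula for effective resistance, and then invoke Theorem~\ref{kal}. Fix the given circular planar order, say $c=(1,2,\dots,n)$, write $c_e$ for the edge conductances, and set $w(H)=\prod_{e\in H}c_e$ for a subgraph $H$. Recall the classical formula for effective resistance in the resistance-metric literature (\cite{klein93,bapat1,bapat2}): with $\tau=\sum_{T}w(T)$ the weighted sum over spanning trees of $N$, and $F_{ab}=\sum_{F}w(F)$ the weighted sum over spanning $2$-forests whose two trees separate $a$ from $b$, one has $W_{ab}=F_{ab}/\tau$. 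Since $N$ is connected, $\tau>0$, so each Kalmanson inequality $W_{ik}+W_{jl}\ge W_{ij}+W_{kl}$ (and its companion) is equivalent to the corresponding inequality among the $F$-quantities.

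The next step is to reduce each inequality to a single statement about $2$-forests. For four terminals in circular order $i<j<k<l$, every spanning $2$-forest distributes $i,j,k,l$ into its two trees, and a short bookkeeping over the eight distribution patterns (up to swapping the two trees) shows
$$F_{ik}+F_{jl}-F_{ij}-F_{kl}=2\sum_{F:\,\{i,j\}\mid\{k,l\}} w(F)-2\sum_{F:\,\{i,k\}\mid\{j,l\}} w(F),$$
where the sums run over $2$-forests whose trees separate the four terminals in the indicated way; an identical computation gives $F_{ik}+F_{jl}-F_{jk}-F_{il}=2\sum_{\{i,l\}\mid\{j,k\}}w(F)-2\sum_{\{i,k\}\mid\{j,l\}}w(F)$. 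Thus both Kalmanson inequalities follow at once from the single claim that no spanning $2$-forest separates $\{i,k\}$ from $\{j,l\}$, i.e. that the ``interlaced'' type contributes weight zero, since the remaining sums are non-negative.

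The heart of the argument, and the step I expect to be the main thing to state cleanly, is the planarity input that kills the interlaced type. Suppose some spanning $2$-forest had $i,k$ in one tree $A$ and $j,l$ in the other tree $B$. Since each tree is connected and $A\cap B=\emptyset$, this yields vertex-disjoint paths $\gamma_1\subset A$ from $i$ to $k$ and $\gamma_2\subset B$ from $j$ to $l$. In the planar drawing of $N$ in the disk, $i,j,k,l$ lie on the bounding circle in this cyclic order, so $i$ and $k$ cut the circle into two arcs, one containing $j$ and the other $l$; the curve $\gamma_1$ together with one boundary arc bounds a region separating $j$ from $l$, forcing $\gamma_2$ to cross $\gamma_1$ and contradicting planarity (vertex-disjoint paths have disjoint images in a planar embedding). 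This is precisely the non-existence of a $2$-connection for the crossing pair, the same Jordan-curve phenomenon underlying the circular-minor lemma. Hence the interlaced type is empty, both inequalities hold with respect to the order $1,\dots,n$, and $W$ is Kalmanson.

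Finally, effective resistance is a genuine metric, so $W$ is a Kalmanson metric for the order $c$, and Theorem~\ref{kal} produces a unique weighted circular split system whose split metric is $W$ and with which $c$ is consistent; this system is $R_w(N)$. Because $W=W(N)$ is determined by the response matrix $M(N)$ via the formulas of Section~\ref{formulas}, it depends only on the electrical equivalence class, so $N\sim N'$ gives $R_w(N)=R_w(N')$, yielding the asserted invariant.
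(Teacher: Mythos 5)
Your proof is correct, but it takes a genuinely different route from the paper's. The paper localizes: for each circularly ordered quadruple $(i,j,k,l)$ it passes to the $4$-terminal Kron reduction $N'$, observes that every circular planar $N'$ is equivalent to a tree or a $1$-nested network (so its restricted resistance matrix is Kalmanson by Theorem 3.1 of \cite{frontiers}), and handles the one remaining case, $N'=K_4$, by an explicit symbolic computation showing that each Kalmanson difference is a positive multiple of a circular minor of $M'$, hence non-negative by the Curtis--Ingerman--Morrow criterion. You instead work globally with the classical matrix-forest formula $W_{ab}=F_{ab}/\tau$, reduce each Kalmanson difference to the identity $F_{ik}+F_{jl}-F_{ij}-F_{kl}=2\sum_{\{i,j\}\mid\{k,l\}}w(F)-2\sum_{\{i,k\}\mid\{j,l\}}w(F)$ (which I checked over the eight distribution types; it is right, as is its companion), and then kill the interlaced term by the Jordan-curve observation that a spanning $2$-forest separating $\{i,k\}$ from $\{j,l\}$ would produce vertex-disjoint crossing paths in the disk. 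Your version is self-contained (it does not lean on the external $1$-nested result or on the circular-minor characterization), avoids the paper's slightly delicate claim that the only non-planar $4$-terminal reduction is $K_4$, and as a bonus exhibits the Kalmanson excess $W_{ik}+W_{jl}-W_{ij}-W_{kl}$ as $2/\tau$ times the weighted count of $2$-forests of type $\{i,j\}\mid\{k,l\}$, which gives a combinatorial formula for the split weights of $R_w(N)$. What the paper's route buys in exchange is that it runs entirely off the response matrix and the circular-minor machinery already set up for the reconstruction algorithms later in the paper. One small point to make explicit in your write-up: $\tau>0$ requires that $N$ be connected using edges of strictly positive conductance, and the non-negativity of the surviving forest sums uses the standing assumption that all conductances are non-negative.
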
 
\begin{proof}
Given a circular, outer planar network $N$ we take the given circular ordering of its leaves and choose any size 4 circular subsequence of the outer nodes:  $(i,j,k,l)$.

Then let $N_{ijkl}$ be the circular planar network with the same underlying graph as $N$, but with the boundary nodes just those four. Let $N'$ be the Kron reduced network of $N_{ijkl}$ and let $M'$ be the Kron reduction of $M$ with respect to those four nodes. Thus $M'$ is a $4\times 4$ response matrix itself. Since the original network is circular planar then the two distinct circular minors of $M'$ are non-negative.  

 Let $W'$ be the restriction of $W(N)$ to the four chosen nodes $(i,j,k,l).$ Thus $W'$ is the resistance matrix corresponding to $M'.$ Since $W'$ is a restriction we have $W'_{ij} = W_{ij},$ and the same for the rest of the respective off-diagonal entries of $W,W'.$

There is a case to consider for each of the possible Kron reduced graphs $N'$. Most of these cases are simply discussed: the circular \emph{planar} reduced graphs $N'$ are all equivalent to trees or 1-nested networks, so their resistance matrices $W'$ (which are just the restrictions of $W(N)$ to the leaves $i,j,k,l$) are Kalmanson by Theorem 3.1 of \cite{frontiers}. 

If $N'$ is non-circular-planar, then it must contain the edges $\{i,k\}$ and $\{j,l\}$. In that case $N$ must have contained interior paths from $i$ to $k$ and from $j$ to $l$. However, then $N$ must also contain interior paths from $i$ to $j$ and from $k$ to $l$. Thus the only non-circular-planar possibility for $N'$ is the complete graph on the four nodes $i,j,k,l$.
Thus we consider when  $M'$ is the weighted Laplacian of a complete graph $N'$ on 4 vertices, with edge weights the corresponding off-diagonal entries of $M'$. We show $M'$ and the graphs $N, N'$ in Figure~\ref{uno}.
The resistances of each edge were chosen for convenience to be $p,q,r,x,y,z$ so that the conductances are the reciprocals. Thus the non-negative circular minors are:
$$\frac{1}{xq}-\frac{1}{ry} \ge 0; \text{~~~ (this is the circular minor using (i,j;k,l), that is, rows 1,2 and columns 4,3,)}$$ $$ \frac{1}{pz}-\frac{1}{ry}\ge 0; \text{~~~ (this is the circular minor using (j,k;i,l), that is, rows 2,3 and columns 1,4,)}$$
\hspace{1.35in} which imply that $ry - pz\ge 0$ and $ry - qx \ge 0.$ 

\begin{figure}
    \centering
    \includegraphics[width=\textwidth]{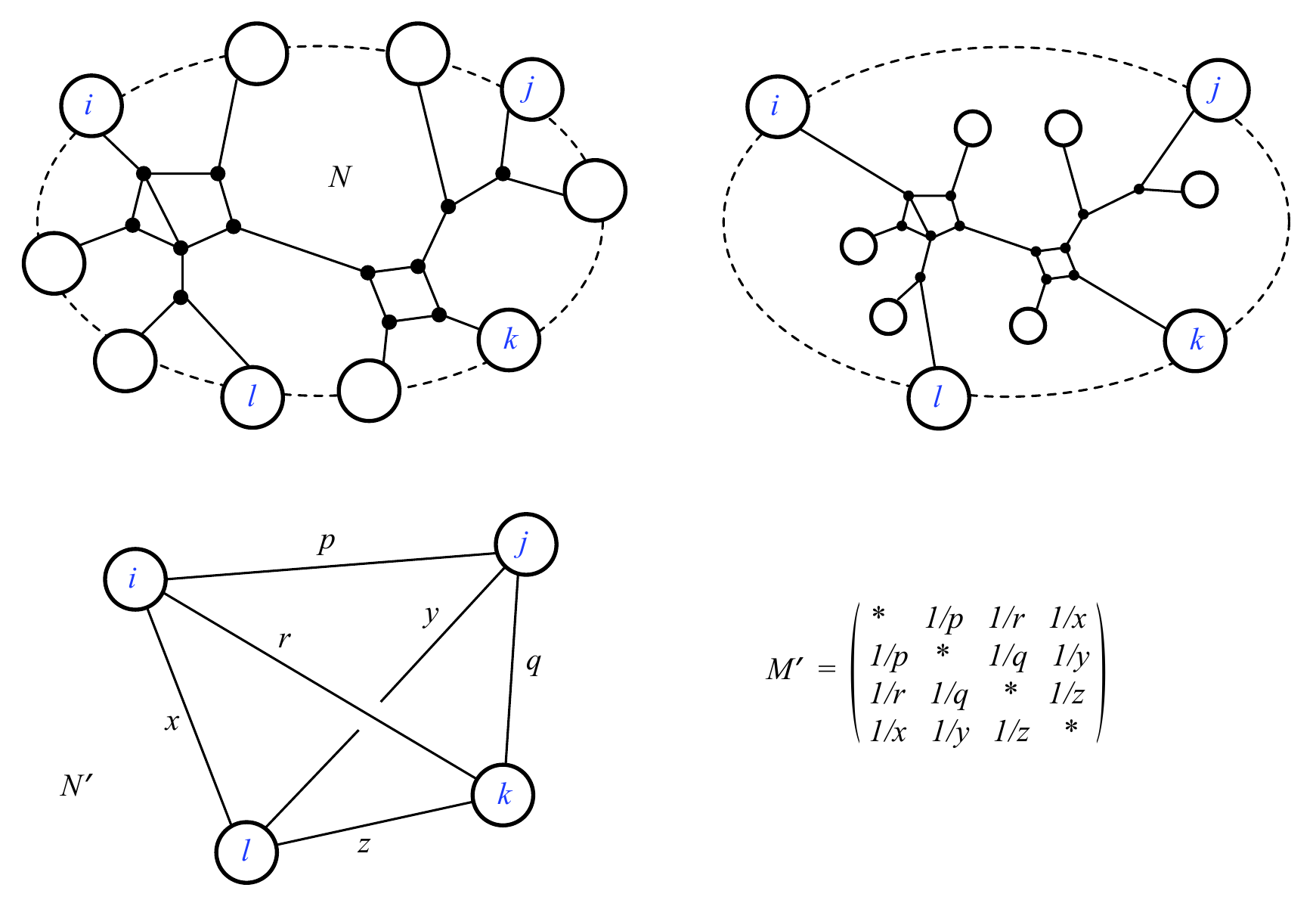}
    \caption{Top: $N$ is an arbitrary circular planar network with a selection of four cyclically ordered nodes. Bottom: the resulting Kron reduction $N'$ is a complete graph (with edges weighted by resistance), and the reduced response matrix  is $M'$. (The asterisks stand for the diagonal entries which make the row sums zero.)}
    \label{uno}
\end{figure}

Next, using either Ohm's law or the pseudoinverse, we calculate: $W_{ik}+W_{jl} - W_{ij}-W_{kl} = $

$$
 \frac{2qx(ry - pz)}{pqr + pqx + pqz + pry + qrx + prz + qry + pxy + pxz + qxy + pyz + qxz + rxy + qyz + rxz + ryz}$$\\
which implies $W_{ik}+W_{jl} - W_{ij}-W_{kl} \ge 0$, so $W_{ik}+W_{jl} \ge W_{ij}+W_{kl}.$\\

Similarly, $W_{ik}+W_{jl} - W_{jl}-W_{ik} = $
$$\frac{2pz(ry - qx)}{pqr + pqx + pqz + pry + qrx + prz + qry + pxy + pxz + qxy + pyz + qxz + rxy + qyz + rxz + ryz}
$$\\
which implies $W_{ik}+W_{jl} - W_{jk}-W_{il} \ge 0$, so $W_{ik}+W_{jl} \ge W_{jk}+W_{il}.$ \end{proof}

 \begin{remark}
 The converse of Theorem~\ref{bigth} does not hold in general. As a counterexample, consider the network $N$ in Figure~\ref{counter}. The conductances as shown give rise to the response matrix $M$ as follows:
 $$
 M=\begin{bmatrix}
-5 & 2 & 1 & 0 & 2\\
2 & -5 & 2 & 1 & 0\\
1 & 2 & -5 & 2 & 0\\
0 & 1 & 2 & -5 & 2\\
2 & 0 & 0 & 2 & -4
\end{bmatrix}
 $$
 The circular pair $(1,2;4,3)$ has the circular determinant -1, which demonstrates that $N$ is non-planar. 
 The corresponding resistance matrix is 
 
 $$
 W=\begin{bmatrix}
0 &    16/51 & 19/51 &  7/17 &  6/17 \\   
      16/51 &  0 &     5/17 & 19/51 & 25/51  \\  
      19/51 &  5/17 &  0 &    16/51 & 25/51  \\  
       7/17 & 19/51 & 16/51 &  0 &     6/17  \\  
       6/17 & 25/51 & 25/51 &  6/17 &  0   
 \end{bmatrix}
 $$

 This $W$ is Kalmanson, as seen by the fact that it has corresponding split network as seen in Figure~\ref{counter}. 
 \end{remark}
 
 \begin{figure}[h]
     \centering
     \includegraphics{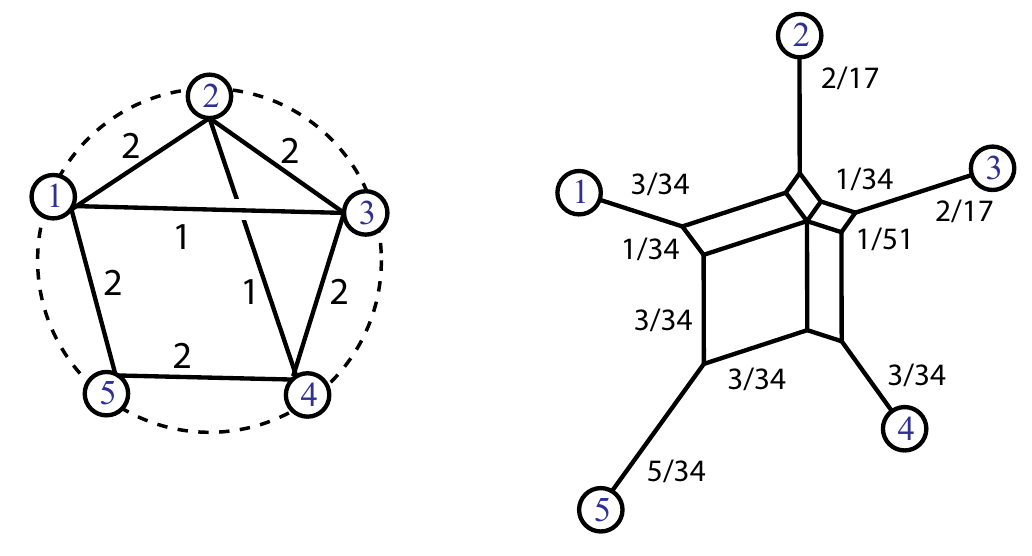}
     \caption{Counterexample: non-circular-planar $N$ on the left and its corresponding circular split network on the right.}
     \label{counter}
 \end{figure}

\begin{definition}
When a circular split system $s$ is found as the image of a circular planar network, $s=R_w(N)$, we say that $s$ is an \emph{electrical circular split system.}
\end{definition}


\subsection{Bridges preserved}

A bridge of a circular planar network is an edge whose removal disconnects the network. Similarly, a cut-node is a vertex whose removal disconnects the network. Circular split networks corresponding to a circular split system have all the same cut points and bridges. 

\begin{thm}\label{brij}
For any bridge or cut-vertex of $N$ which splits the graph into two components with respective exterior node sets $A,B$, there is a bridge or cut-vertex of the corresponding split network  $s = R_w(N)$ that splits $s = R_w(N)$ into two components with the same sets of leaves $A,B$.
\end{thm}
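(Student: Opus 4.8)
The plan is to exploit the additive behaviour of effective resistance across a bridge or cut-vertex, and to show that this additivity forces every split of $R_w(N)$ that would \emph{cross} the partition $A|B$ to have weight zero; once no split crosses $A|B$, the split network must separate along that partition.

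First I would fix a circular order of $[n]$ consistent with $N$ and record two structural facts. Because $N$ is drawn in the disk with its boundary nodes on the bounding circle, and the bridge (or cut-vertex) separates $N$ into the two blobs carrying $A$ and $B$, a Jordan-curve argument shows that $A$ and $B$ each occupy a contiguous arc; after relabelling we may take $A=\{1,\dots,a\}$ and $B=\{a+1,\dots,n\}$. Second, effective resistance is additive through the separating structure: for a cut-vertex $v$ we have $W_{xy}=W_{xv}+W_{vy}$ for all $x\in A,\ y\in B$, while for a bridge $e=\{u,w\}$ of resistance $r_e$ we have $W_{xy}=W_{xu}+r_e+W_{wy}$ with $x$ on the $u$-side and $y$ on the $w$-side.

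Next comes the heart of the argument. By Theorem~\ref{bigth} and Theorem~\ref{kal}, $W=W(N)$ is Kalmanson for this circular order and $R_w(N)$ is the unique circular weighted split system realising it; with respect to this order each split separates a circular arc $\{p+1,\dots,q\}$ from its complement, and its weight is the nonnegative quartet
$$\alpha_{pq}=\tfrac{1}{2}\big(W_{p,q+1}+W_{p+1,q}-W_{p,q}-W_{p+1,q+1}\big).$$
A split crosses $A|B$ precisely when one of its two cuts falls strictly inside the $A$-arc and the other strictly inside the $B$-arc, i.e. $1\le p\le a-1$ and $a+1\le q\le n-1$, so that $p,p+1\in A$ and $q,q+1\in B$. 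Then all four entries appearing in $\alpha_{pq}$ join a point of $A$ to a point of $B$, and substituting the additive expressions from the second fact makes every term cancel in pairs (the $W_{\cdot v}$ contributions, and in the bridge case also the common summand $r_e$), giving $\alpha_{pq}=0$. Hence no split of $R_w(N)$ crosses $A|B$.

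Finally I would translate this into the desired separating structure. Since every split of $R_w(N)$ is compatible with $A|B$, the partition $A|B$ can be inserted into the polygonal/split-network picture without meeting any existing split. By the standard structure of circular split networks (a split compatible with all others is displayed by a single edge), this yields a bridge of $R_w(N)$ with leaf partition $A|B$ when $A|B$ carries positive weight---the case arising from a genuine bridge of $N$---and yields a cut-vertex of $R_w(N)$ with the same leaf partition when the weight is zero, as happens for a cut-vertex of $N$. In either case the two component leaf sets are exactly $A$ and $B$. I expect the genuinely delicate point to be this last step's appeal to split-network structure---specifically, arguing the bridge-versus-cut-vertex dichotomy cleanly from the polygonal dual---whereas the vanishing of crossing splits is a short and robust computation.
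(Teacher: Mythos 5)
Your argument is correct in substance but follows a genuinely different route from the paper. The paper first disposes of boundary cut-vertices by the same series-additivity observation you use, and then handles everything else indirectly: every circular order consistent with $N$ makes $W(N)$ Kalmanson, hence is consistent with $R_w(N)$ by Theorem~\ref{kal}; since the consistent orders of a network are exactly those generated by twisting at its bridges and cut-vertices, $R_w(N)$ must possess a separating structure for each such twist of $N$. Your proof instead computes directly: you invoke the explicit isolation-index formula for the split weights of a Kalmanson metric and show, via series additivity of effective resistance through the separating vertex or edge, that every split crossing $A|B$ receives weight zero. This buys a self-contained, quantitative argument that treats boundary cut-vertices uniformly rather than as a special case, and it makes transparent \emph{why} the crossing splits vanish; the cost is that you must import two standard facts the paper avoids, namely that the unique circular split decomposition of Theorem~\ref{kal} has weights given by the quartet formula (this should be cited, e.g.\ to the circular decomposition theory underlying Neighbor-Net), and the structural fact that a partition compatible with all splits of a circular split system is displayed by a bridge or cut-vertex of its split network---the step you rightly flag as delicate, though the paper itself asserts comparable structural facts without proof. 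Conversely, the paper's route leans on the unproved assertion that an enlarged set of consistent circular orders forces the corresponding separating structure in $R_w(N)$, so neither proof is fully elementary. One small correction: with $p<p+1\le q<q+1$ in circular order the Kalmanson inequalities make $W_{p,q}+W_{p+1,q+1}-W_{p,q+1}-W_{p+1,q}$ the nonnegative split weight, so your $\alpha_{pq}$ has the opposite sign to the standard convention; since you prove the quantity is exactly zero, this does not affect the conclusion, but it should be fixed for consistency with the positivity claim.
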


\begin{proof}
A special case is a cut-vertex $v$ of the boundary of $N.$ In this case, in the Kron reduced network $K(N)$  $v$ is a vertex of at least two cliques. We see that the reduction of the response matrix of the network to the nodes of a single clique in $K(N)$ is the same as finding the submatrix of $M$ using just those nodes of that clique. In terms of resistance, when finding the effective resistance between nodes separated by $v$ the final calculation will be addition of two resistances in series, the sum of the resistances before and after $v$. That is, every distance between terminals separated by the cut-vertex $v$ will satisfy the triangle inequality through $v$ exactly.  Thus a circular split network representing those resistances via addition of paths can be constructed from two smaller circular split networks using the nodes on two sides of $v$, and therefore $v$ itself will be a cut-vertex of the overall split network.

For all other cases, we show that in general $R_w(N)$ does not subtract from the collection of bridges and cut-point nodes of $N.$ We recall that any circular reordering $c$ of the terminals of $N$ which keeps $N$ planar is called \emph{consistent} with $N.$  If $c$ is a circular order consistent with $N$, then $W(N)$ is Kalmanson with respect to that circular order $c.$ Thus $c$ is also consistent with  $R_w(N),$ by Theorem~\ref{kal}.  Therefore, since the set of circular orders consistent with $N$ is determined by twisting around the splits associated to bridges or cut-point nodes of $N$, every bridge or cut-point node of $N$ must correspond to a bridge or cut-point node of $R_w(N),$ else some circular order would no longer be consistent.

\end{proof} 


Since interior cut-vertices of $N$ can correspond to bridges of $R_w(N)$, when trying to determine actual bridges of $N$ we will have to inspect each candidate, that is, each bridge of $R_w(N).$

Let a connection of $N$ \emph{avoid} a split $A|B$ if each path in the connection  begins and ends in the same part of the split; $A$ to $A$ or $B$ to $B$. We have the following:

\begin{thm}\label{brijj}
For a split $A|B$  of $[n]$, let there be a bridge or cut-vertex displaying that split in $R_w(N).$ If the connections which avoid that split all exist in $N$, then that split can be represented by a bridge in $N$.
\end{thm}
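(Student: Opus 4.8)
The plan is to establish a correspondence between the combinatorial bridge structure of $R_w(N)$ and the actual path structure of $N$, using connections as the bridge between the two. I would first recall what it means for a split $A|B$ to be displayed by a bridge or cut-vertex in $R_w(N)$: by the polygonal/split-network picture, such a split is a diagonal of the labeled polygon, and no other split in the system crosses it. Equivalently, in the circular split network there is a single edge (the bridge) or a single vertex whose removal separates the leaves into exactly $A$ and $B$. The goal is to upgrade this to an honest bridge in $N$ realizing the same split, under the hypothesis that every connection avoiding $A|B$ is present in $N$.

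The key steps would proceed as follows. First I would use Theorem~\ref{bigth}, which guarantees that $R_w(N)$ is a genuine invariant of the electrical equivalence class of $N$, so I am free to work with any representative; in particular I may pass to a minimal (critical) graph, where the medial-graph strand picture of Example~\ref{match} is lens-free and the matching cleanly encodes the connections. Second, I would translate the hypothesis ``all connections avoiding $A|B$ exist in $N$'' into a statement about which circular minors of $M(N)$ are strictly positive, via the connection--minor correspondence (the Lemma stating $\det M(P,Q)>0$ iff there is a $k$-connection for $(P,Q)$). The crucial observation is that if no connection \emph{crosses} the split---which is exactly what the avoidance hypothesis is asserting about the full connection set---then in the medial graph no strand can pass from the $A$-side to the $B$-side more than the minimum necessary, forcing a single separating edge. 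Third, I would argue that a cut-vertex or non-bridge candidate in $R_w(N)$ that displays $A|B$ cannot require a crossing connection in $N$: any $k$-connection joining a point of $A$ to a point of $B$ would have to traverse the split region, and its existence would contribute a positive circular minor witnessing a connection that does \emph{not} avoid $A|B$, contradicting the hypothesis. Hence the only connections present are the avoiding ones, every resistance path between $A$ and $B$ passes through a single narrow channel, and that channel is realized by a bridge.

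The main obstacle I expect is the third step: carefully ruling out the possibility that $A|B$ is displayed only by a cut-vertex (an interior cut-node, as flagged in the remark after Theorem~\ref{brij}) rather than by a true bridge edge, and showing that the avoidance hypothesis is exactly the extra input that promotes the cut-vertex to a bridge. Concretely, an interior cut-vertex $v$ separating $A$ from $B$ still forces all $A$-to-$B$ resistances to add in series through $v$ (as in the proof of Theorem~\ref{brij}), but it permits more than one interior edge incident to the channel; the avoidance condition must be used to collapse these into a single edge. I would handle this by showing that if there were two distinct interior paths crossing between the two sides, one could build a $2$-connection straddling the split, i.e.\ a connection $(P,Q)$ with endpoints on both sides that does not avoid $A|B$, whose positive circular minor contradicts the hypothesis; thus at most one edge carries all cross-split current, which is the definition of a bridge.

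A secondary technical point to watch is compatibility of circular orders: I would invoke the fact (from Theorem~\ref{kal} and the proof of Theorem~\ref{brij}) that every circular order consistent with $N$ is consistent with $R_w(N)$, so the split $A|B$ sits as a non-crossing diagonal simultaneously in the polygon for $N$ and for $R_w(N)$, letting me match the two components of $N$ with the two leaf-sets $A$ and $B$ unambiguously before identifying the realizing edge.
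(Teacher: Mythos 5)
There is a genuine gap, and it stems from a misreading of the hypothesis. The condition ``the connections which avoid that split all exist in $N$'' is a \emph{completeness} assumption: every circular pair $(P,Q)$ whose paths would stay entirely within $A$ or entirely within $B$ is actually realized by a connection in $N$. You have instead read it as a \emph{non-existence} assumption---that no connection of $N$ crosses $A|B$---and both your third step and your handling of the ``main obstacle'' derive a contradiction from the existence of a crossing connection. That is not a contradiction: since $N$ is connected, $1$-connections from $A$ to $B$ always exist and are perfectly compatible with the hypothesis, so your argument as written would vacuously rule out every connected network. The medial-graph and circular-minor machinery you invoke is not wrong in itself, but the logical engine driving your proof (``a crossing connection contradicts the hypothesis'') is false.

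The paper uses the hypothesis in the opposite direction. The worry is that $A|B$ might be realized in $N$ only by an interior cut-vertex $v$ rather than by a bridge. Expanding $v$ into an edge $v_1v_2$ can only \emph{add} connections: the crossing connections are unaffected (at most one path of any connection can use the channel, whether it is a vertex or an edge), while the connections that could newly appear are exactly those whose disjoint paths sit on the two sides separately---and each such connection avoids the split. The hypothesis guarantees every one of these is already present in $N$, so the expansion does not change the set of connections; since the set of connections determines the network up to electrical equivalence, $N$ is equivalent to a network in which the split is displayed by a bridge. Your proposal is missing this ``expanding the cut-vertex does not increase the connection set'' mechanism entirely, and it is the essential point, because the conclusion concerns the equivalence class of $N$ rather than any particular drawing of it.
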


\begin{proof}
The only connections  of $N $that can use a (non-boundary) cut-vertex as part of their set of paths are the 1-connections, and if the cut-vertex  is expanded to a bridge then that set of 1-connections of $N$ will not change. Thus if the set of connections that avoid a bridge $b$ all exist in $N$, then the total set of connections will be the same with or without the bridge. Therefore since the set of connections determines the network $N$ up to equivalence, we may assume the bridge exists.
\end{proof}

Note that this theorem does not provide necessary conditions for a bridge; there may be bridges which this criteria does not detect. Once a bridge is detected we can use that information to reconstruct the network in pieces. The following will allow the reconstruction process described in Section~\ref{recon}.

\begin{thm}
Given a bridge of $N$, the sub-networks of $N$ separated by that bridge are independently determined by the response matrix $M.$ That is, the inverse problem can be solved separately for those sub-networks. 
\end{thm}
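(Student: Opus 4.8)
\emph{Proof proposal.} The plan is to reduce the global inverse problem to two local ones by cutting at the bridge and showing that the resistance data decouples across it. Write the bridge as $b=\{u,v\}$, and let its removal split $N$ into the component $C_u\ni u$ carrying the exterior nodes $A$ and the component $C_v\ni v$ carrying the exterior nodes $B$, so that $A|B$ is the displayed split and $A\cup B=[n]$. Denote by $N_A$ the sub-network $C_u$ with terminals $A$ (the attachment node $u$ being interior unless it is itself labelled), and likewise by $N_B$ the sub-network $C_v$ with terminals $B$. I would aim to prove that $W(N)$ restricts to the resistance matrices of these two pieces, namely $W(N)|_A=W(N_A)$ and $W(N)|_B=W(N_B)$, and that each piece can then be reconstructed from its own restriction.

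The crux---and the step I expect to be the main obstacle---is this decoupling claim. Fix $i,j\in A$, inject a unit current at $i$, extract it at $j$, and impose no source inside $C_v$. Since $C_v$ contains no current sources and is joined to the rest of $N$ only through the single edge $b$, Kirchhoff's current law forces the net current across $b$ to equal the total current drawn inside $C_v$, which is zero; hence no current flows through the bridge. Consequently $C_v$ sits at the constant potential of its attachment node, contributes nothing to the solve, and the potentials and the effective resistance $W_{ij}$ are exactly those computed in $C_u=N_A$ with $C_v$ and $b$ deleted. This yields $W(N)|_A=W(N_A)$, and the symmetric argument gives $W(N)|_B=W(N_B)$. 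The care required here is to treat the interior attachment node $u$ correctly and to confirm that the restricted matrix is literally the resistance metric of the sub-network in its induced circular order, which is planar because $N$ is.

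With the decoupling established the conclusion is routine. Given the response matrix $M=M(N)$, I would first recover $W=W(M)$ by the one-to-one correspondence of Section~\ref{formulas}, locate the bridge and its split $A|B$ from the split network $R_w(N)$ as in Theorems~\ref{brij} and~\ref{brijj}, and form the principal submatrices $W|_A$ and $W|_B$. By the decoupling these equal $W(N_A)$ and $W(N_B)$, so applying the formula of Section~\ref{formulas} in the reverse direction produces $M(N_A)=M(W|_A)$ and $M(N_B)=M(W|_B)$, each a bona fide response matrix with non-negative circular minors. The Curtis--Morrow reconstruction of \cite{curtisbook, curtis2} then recovers $N_A$ and $N_B$ up to electrical equivalence. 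Because the two computations read off disjoint principal submatrices of the single matrix $W$, they proceed without reference to one another, which is exactly the assertion; the full network is then obtained by rejoining the two reconstructed pieces through the bridge $b$, whose resistance is recorded as the weight of the split $A|B$ in $R_w(N)$.
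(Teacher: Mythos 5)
Your central decoupling claim is correct, and your route is genuinely different from the paper's. You delete the bridge outright and show, by the zero-net-current argument, that $W(N)$ restricted to $A$ is the resistance matrix of the standalone component $N_A$; the paper instead keeps one terminal $x\in B$ as boundary and Kron-reduces away the rest of $B$, obtaining a network $N'$ equivalent to $N_A$ \emph{plus} the bridge, re-attached from its original endpoint $y$ to $x$ with conductance the reciprocal of the effective resistance from $x$ to $y$. The two operations are two faces of the same fact, since Kron reduction of $M$ corresponds to restriction of $W$, and your physical explanation of why the far side carries no current is essentially why the paper's Schur complement collapses all of $B$'s side into a single effective edge. For the bare assertion that each side can be solved without reference to the other, your argument goes through.

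There is, however, a genuine gap in your last step, and it is exactly the gap the paper's extra terminal is designed to close. Restricting to $A$ alone destroys the information of \emph{where} the bridge attaches inside $N_A$. Concretely, take $N_A$ to be a path $1$--$p$--$2$ with interior node $p$ and the bridge hanging from $p$: then $W|_{\{1,2\}}$ is a single number, its reconstruction is a single edge from $1$ to $2$, and you can recover neither the node $p$ nor the individual resistances on its two sides --- yet the full matrix does determine them (via $W_{1,x}-W_{2,x}$ for $x\in B$). So the sub-network \emph{together with its attachment data}, which is what you need in order to ``rejoin the two reconstructed pieces through the bridge,'' is not determined by $W|_A$ and $W|_B$ alone. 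Retaining one terminal from across the bridge, as the paper does (and as Step 4 of Section~\ref{recon} and Example~\ref{big} do for each blob), restores exactly this information. Your claim that the bridge's resistance is the weight of the split $A|B$ in $R_w(N)$ is also only safe up to absorbing series contributions into the bridge; the paper sidesteps this by assigning $b$ the effective resistance from $x$ to $y$ and recovering all conductances globally at the end.
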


\begin{proof}
Let $A|B$ be the split of $[n]$ displayed by a bridge $b$ of $N.$ Consider the sub-network $N_A$ of $N$ created by deleting $b$ and keeping the component of $N$ with terminals $A$. Let $y$ be the node of $N_A$ that was formerly the end of $b.$ Looking again at the entire network $N$,  we choose one terminal $x$ of $B$ together with all the terminals of $A$ to be kept as the boundary, and demote the remaining terminals of $B-x$ to interior nodes. The resulting network $N'$ will be equivalent to $N_A$ plus the bridge $b$ (now from $x$ to $y$) with a new conductance on $b$ that is the reciprocal of the effective resistance from $x$ to $y.$  Meanwhile the Kron reduction  of $M(N)$ with respect to the terminals of $B-x$ gives the new response matrix $M'(N')$. Solving the inverse problem on $M'$ allows the reconstruction of $N'$ and thus $N_A$.  
\end{proof}
 For example, see the final network in Figure~\ref{exr5four} which is the network $N'$ for the bridge $b$ in Figure~\ref{covernet} that displays the split $A|B = \{5,6,7\}|\{1,2,3,4,8,9,10\}.$ In Figure~\ref{exr5four} we chose $x=1$ and used the Kron reduction of $M$ to reconstruct the graphs of $N'$. 
 
\subsection{Obstruction}

In \cite{frontiers} it is conjectured that for every weighted 2-nested network there is a weighted 1-nested network with matching resistance distance. Here we show that is not the case.

\begin{thm}\label{sym}
Let a circular split network $s$ have two non-trivial, weighted, crossing, splits with equal weights: $w((A\cup B)|(C\cup D))=a$ and $w((A\cup D)|(B\cup C)) =a$; (where the unions are disjoint) and the two related non-trivial, non-crossing splits also equally weighted: $w((A\cup B\cup C)|D) = b$ and $w((A\cup D\cup C)|B) = b.$ An example is on the right hand side of Figure~\ref{absym}. If $s=R_w(N)$ is the image of a 1-nested network $N$ then $a=b.$
\end{thm}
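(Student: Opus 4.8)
The plan is to translate the four prescribed split weights into arc resistances of a single cycle of $N$ and then read off $a=b$ from two multiplicative relations. The starting point is a structural observation about $1$-nested networks: every blob of $N$ is a single cycle, and in the split system $R_w(N)$ two splits cross if and only if they are displayed by cutting two edges of one common cycle (splits coming from bridges, or from two distinct blobs, are pairwise compatible and hence non-crossing). Since the two given splits $(A\cup B)|(C\cup D)$ and $(A\cup D)|(B\cup C)$ cross, I would first argue that they are both displayed by a single cycle $Z$ of $N$. The simultaneous presence of these two crossing splits together with the corner splits $(A\cup B\cup C)|D$ and $(A\cup C\cup D)|B$ then forces the terminals of $A,B,C,D$ to meet $Z$ in four contiguous groups, occurring in the cyclic order $A,B,C,D$.

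Next I would reduce to an effective four-cycle. Kron-reducing the degree-two interior nodes of $Z$ performs series reductions and collapses each of the four gaps between consecutive groups to a single edge; call the resulting arc resistances $\alpha$ (between the $A$- and $B$-groups), $\beta$ (between $B,C$), $\gamma$ (between $C,D$), $\delta$ (between $D,A$), and set $R=\alpha+\beta+\gamma+\delta$. The pendant subnetworks carrying the terminals of each block attach to $Z$ at single cut-vertices and contribute only to other (trivial or block-internal) splits; they leave $\alpha,\beta,\gamma,\delta,R$ unchanged. I would then invoke the cycle split-weight formula: on a cycle the effective resistance between two nodes is the parallel combination of its two arcs, and the unique circular split system of that Kalmanson metric (Theorem~\ref{kal}) assigns to the split cutting arcs of resistance $\rho_i,\rho_j$ the weight $\rho_i\rho_j/R$ (verified directly, as a single cycle reproduces $W_{ik}=(\text{one arc})(\text{other arc})/R$). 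Each of the four prescribed splits is displayed purely by cutting two of the gap-arcs, so $a=\beta\delta/R=\alpha\gamma/R$ from the two crossing splits and $b=\gamma\delta/R=\alpha\beta/R$ from the two corner splits.

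The arithmetic then closes the argument. Equality of the two expressions for $a$ gives $\beta\delta=\alpha\gamma$, and equality of the two expressions for $b$ gives $\gamma\delta=\alpha\beta$. Dividing these relations yields $\beta^2=\gamma^2$, so $\beta=\gamma$ (and similarly $\alpha=\delta$) since all arc resistances are positive. Consequently $a=\beta\delta/R=\gamma\delta/R=b$, which is exactly the claimed conclusion; note the imposed $B\leftrightarrow D$ symmetry of the weights has been shown to force the geometric symmetry $\alpha=\delta,\ \beta=\gamma$ of the cycle.

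The main obstacle I anticipate is not the final algebra but the structural reduction underlying the first two paragraphs: rigorously establishing that both crossing splits are carried by one and the same cycle $Z$, that $A,B,C,D$ meet $Z$ in four contiguous arcs in the stated cyclic order, and that after Kron reduction the four prescribed splits are \emph{exactly} the four ``gap-pair'' cuts with weights $\rho_i\rho_j/R$. In particular I would need to confirm that block-internal cycle edges (which may enlarge $R$) and possible multiple attachment points of a block do not alter any of the four weights, since each remains displayed only by cutting gap-arcs.
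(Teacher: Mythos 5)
Your proposal is correct and follows essentially the same route as the paper: both rest on the fact (cited there from the 1-nested analysis in the \emph{frontiers} reference) that a split displayed by two non-adjacent edges of a cycle with resistances $p,q$ has weight $pq/t$ with $t$ the total cycle resistance, after which the four equations $a=pq/t=xy/t$, $b=px/t=yq/t$ force $a^2=pqxy/t^2=b^2$ and hence $a=b$. The structural points you flag as the main obstacle (both crossing splits lying on one cycle, contiguity of the blocks, reduction to four effective arcs) are exactly what the paper delegates to that citation, so your extra care there is a more explicit version of the same argument rather than a different one.
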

\begin{proof}
In \cite{frontiers} it is shown that for a resistance weighted 1-nested network $N$, the resistance distance is Kalmanson and gives rise to a faithfully phylogenetic circular split network. A split in $R_w(N)$ corresponding to two non-adjacent edges of a cycle in the network  $N$ with resistance weights $p,q$ has weight $pq/t$ where $t$ is the sum of the edge weights in that cycle. Let that be the first of our two crossing splits, and the other be displayed by edges with weights $x,y.$ Then the weights of the splits in $R_w(N)$ are $pq/t=a$, $xy/t=a,~~px/t=b$ and $yq/t=b.$  Since $t > 0,$ these equations imply $a=b.$
\end{proof}

\begin{figure}
    \centering
    \includegraphics[width =4.25in]{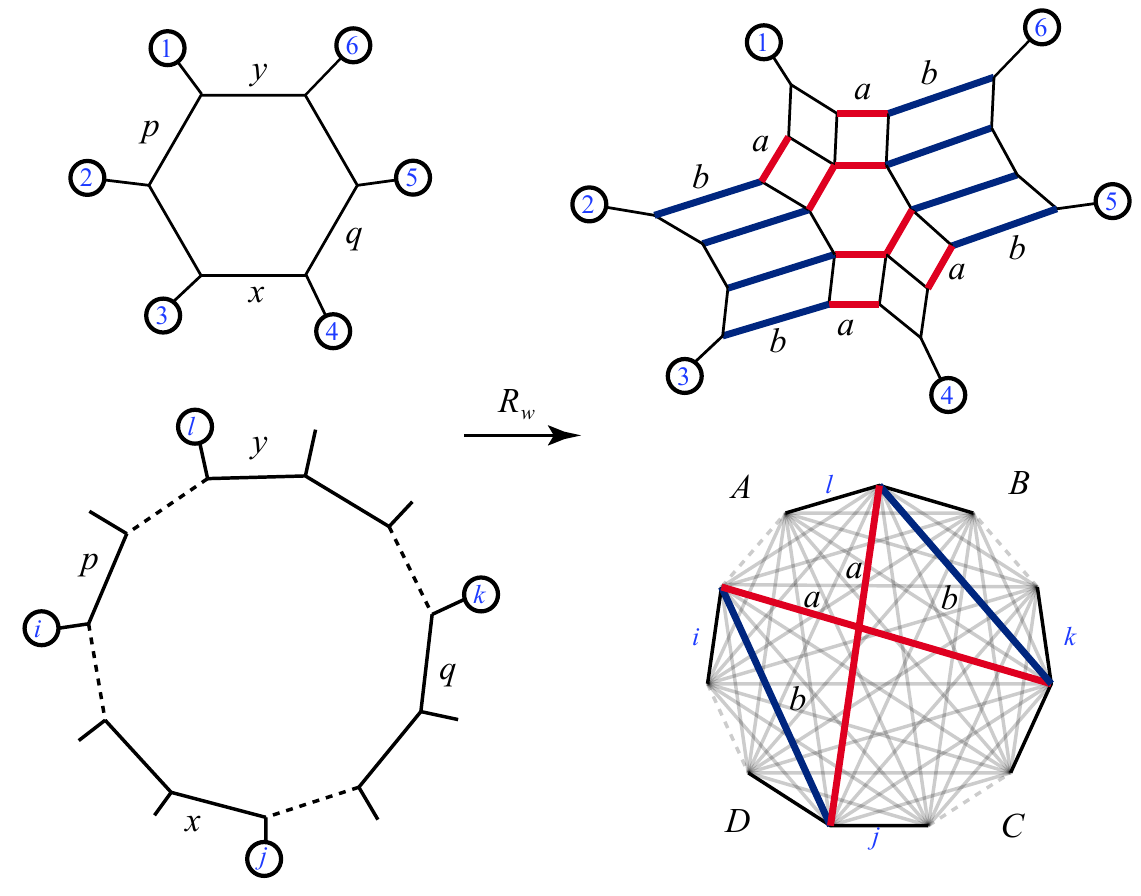}
    \caption{Illustration for Theorem~\ref{sym}. The networks $N$ are shown at left, and two split networks on the right. At the top is the smallest possible example, with a cycle of size six. At the bottom is the general case, with the split network on the right drawn in the dual polygonal representation.}
    \label{absym}
\end{figure}

So we are able to look for obstructions that prevent a circular split network from corresponding to a 1-nested circular planar network. For instance, Figure~\ref{bridgy} shows an example of a circular split network that arises from a 2-nested network $N.$  Theorem~\ref{sym} shows that this circular split network cannot arise from a 1-nested circular planar network.
  
  \begin{figure}
    \centering
    \includegraphics[width=\textwidth]{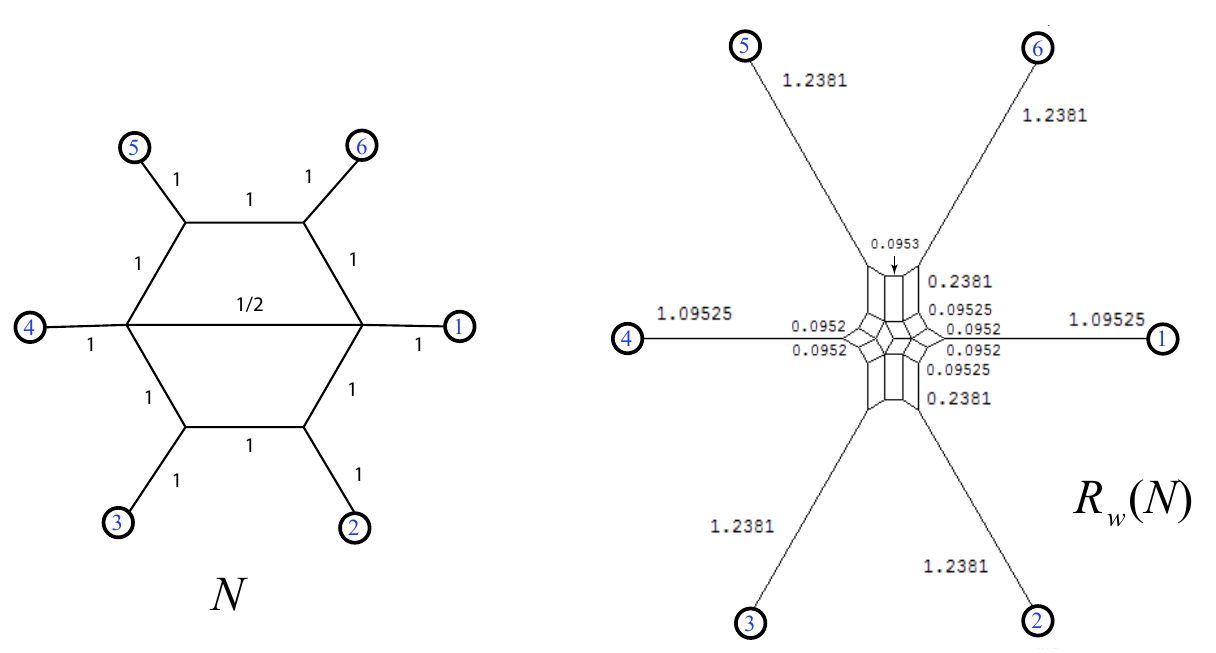}
    \caption{A 2-nested circular planar network $N$ with edges weighted by conductance. Its associated split network with weighted splits shows that it cannot be equivalent to a 1-nested network.}
    \label{bridgy}
\end{figure}
\newpage

\section{Reconstruction}\label{recon}
Given an experimentally determined matrix $M$ or $W$, here are the steps to recognize the existence of, and reconstruct if possible, a circular planar network $N$ (inside the black box) such that $M$ is found by testing  conductances (or $W$, by testing resistances) of the $n$ exposed terminals of $N$. At first we label the terminals arbitrarily to construct the experimental $M$, or $W$. Recall that either matrix can be found from the other using Lemma~\ref{formulas}.  We list the steps here and then label them in the same way in Example~\ref{big}.
\begin{enumerate}
    \item[\textbf{Step 1.}] Check to see if $M$ is a valid weighted Laplacian: symmetric, with the off-diagonal entries non-negative, the diagonals non-positive, and the rows and columns each summing to zero. Constructing the graph of this Laplacian will also determine whether it is connected. At this point we can subdivide the problem via terminals that are cut-points of the Kron reduction itself. For instance, in Figure~\ref{exrun1pt5}, we could restrict to just the terminals 1,3,4,5 since the last, terminal 2, is seen to be separated by the cutpoint terminal 3. \\ 
    \item[\textbf{Step 2.}] If $M$ is indeed the Laplacian of a connected network, then we proceed to test whether that network can be equivalent to a circular planar network $N$. The  question is whether the boundary nodes can be arranged in a circle on the plane with $N$ embedded in the disk. To begin this check, we may find the resistance matrix $W$ associated with $M$. If $W$ is not Kalmanson, then $M$ cannot be planar. If $W$ is Kalmanson with respect to some circular ordering $c$ of the $n$ terminals, then we must still check the circular minors of $M$ to verify that the desired planar network $N$ exists. Note that the original matrix $M$ may not have circular minors all non-negative, but if $W$ is Kalmanson with respect to $c$ then the entries of $W$ and $M$ are easily rearranged  to match the circular order $c$. Then the rearranged $M$ can be checked to see if it has all non-negative circular minors. 
    One way to check $W$ for the Kalmanson property is to run the algorithm neighbor-net on $W$, which detects the order $c$ and finds the unique circular split system $s$ corresponding to $W$ (and to $W_c$), if those exist. We may need to check multiple potential compatible circular orders $c$ to decide if any give a valid circular planar network.
    
     (Note that if we discover some circular ordering $c$ of $[n]$  other than the counting order,  then we can re-start by re-labelling the leaf-nodes by $[n]$ via the clockwise counting order for convenience.) \\
    
    \item[\textbf{Step 3.}]  Drawing a diagram of a split network that displays the split system $s$  immediately reveals the bridges and cut points of the desired network $N$, (but not which is which). This allows us to produce a good first approximation to $N,$ without knowing the interior of the bridge-free portions. Note that a bridge in $s$ can represent either a bridge or a cutpoint of $N.$ We need to determine which, since only the bridges can be used to subdivide our problem. The criteria is that a candidate bridge $b$ must not increase the connections that exist in $M.$  To be sure of this, we must know that $M$ already reflects all the connections which are subdivided by the split displayed by $b$. That is, $M$ must contain positive minors for every connection that could be blocked by shrinking the bridge $b$ to a single cut-point. Deciding might require checking some number of circular minors of $M$ for positivity, and thus decrease our efficiency. If we knew from the beginning that our network contained no cutpoints displaying splits unless a bridge displayed that split, we could proceed much more quickly. (In fact this condition is practical: it is often assumed for phylogenetic networks that the interior nodes are degree 3, since speciation and hybridization usually occur as binary operations.)\\
    
    \item[\textbf{Step 4.}] The graph reconstruction algorithm of Curtis and Morrow, from Chapter 9 of \cite{curtisbook}, can be performed on each  non-trivial-bridge-free  portion (blob). To do so, we start by reducing $W$ to a smaller matrix on each blob in turn. For the new boundary terminals of the subnetwork that makes each blob, we must choose appropriate original terminals from $[n].$ For each bridge that touches the blob in our split network, we may choose any terminal that is separated from the blob by that bridge.  \\
    
    
    \item[\textbf{Step 5.}] After reconstructing each blob the graph of the entire network is found, up to equivalence, by reconnecting our new subnetworks with bridges. Finally, the conductances of the edges are found from the matrix $M_c$ using either the Curtis-Morrow algorithm from \cite{curtisbook} or the algorithm of Kenyon and Wilson, in \cite{kenyon}. Note that the tree-like portion will be immediately solved:  conductance of a tree edge, between two nodes neither of which is part of a cycle, is just the reciprocal of the weight (resistances) of the split.
\end{enumerate}

\begin{example}\label{big}

We show an example for reconstructing a larger network. Starting with $M$ (experimental) we use the formula to get $W$ or vice-versa, and then use Neighbor-net to find the splits. First here is a given response matrix. (We calculated it from an example $L$ using the Schur complement.) Note that there are rounding errors present at every stage; this example should demonstrate both the steps in a situation with perfect measurements and computations, and show several places where approximation can take over should perfection not be the case.
\end{example}

\textbf{Step 1.}
$M =$
$$\displaystyle{\begin{bmatrix}
\frac{  -391}{585} & \frac{ 139}{6763} & \frac{139}{13526} & \frac{223}{2170} & \frac{ 72}{1373} & \frac{ 30}{1373} & \frac{ 10}{1373} & \frac{467}{3141} & \frac{315}{11626} & \frac{223}{578 } \\\\
\frac{  139}{6763} & \frac{-1541}{1283} & \frac{439}{1099} & \frac{929}{1405} & \frac{265}{8779} & \frac{ 63}{5009} & \frac{ 21}{5009} & \frac{311}{5009} & \frac{ 48}{4937} & \frac{801}{20026} \\\\
\frac{  139}{13526} & \frac{439}{1099} & \frac{-585}{731} & \frac{ 727}{2199} & \frac{113}{7487} & \frac{ 63}{10018} & \frac{21}{10018} & \frac{311}{10018} & \frac{24}{4937} & \frac{401}{20051} \\\\
\frac{  223}{2170} & \frac{929}{1405} & \frac{727}{2199} & \frac{-476}{281} & \frac{ 756}{5009} & \frac{315}{5009} & \frac{105}{5009} & \frac{556}{1791} & \frac{240}{4937} & \frac{4007}{20036} \\\\
\frac{  72}{1373} & \frac{265}{8779} & \frac{113}{7487} & \frac{756}{5009} & \frac{-2296}{585} & \frac{2011}{702} & \frac{ 607}{2106} & \frac{493}{912} & \frac{1019}{14606} & \frac{703}{3662  } \\\\
\frac{  30}{1373} & \frac{ 63}{5009} & \frac{ 63}{10018} & \frac{315}{5009} & \frac{2011}{702} & \frac{-2071}{569} & \frac{ 258}{569} & \frac{ 166}{737} & \frac{ 257}{8841} & \frac{271}{3388 } \\\\
\frac{  10}{1373} & \frac{ 21}{5009} & \frac{ 21}{10018} & \frac{105}{5009} & \frac{607}{2106} & \frac{258}{569} & \frac{-483}{569} & \frac{ 166}{2211} & \frac{ 84}{8669} & \frac{180}{6751 } \\\\
\frac{  467}{3141} & \frac{311}{5009} & \frac{311}{10018} & \frac{556}{1791} & \frac{493}{912} & \frac{ 166}{737} & \frac{ 166}{2211} & \frac{-953}{798} & \frac{ 309}{889} & \frac{1151}{1364 } \\\\
\frac{  315}{11626} & \frac{48}{4937} & \frac{ 24}{4937} & \frac{240}{4937} & \frac{1019}{14606} & \frac{257}{8841} & \frac{ 84}{8669} & \frac{309}{889} & \frac{-193}{254} & \frac{ 582}{1147  } \\\\
\frac{  223}{578} & \frac{ 801}{20026} & \frac{401}{20051} & \frac{4007}{20036} & \frac{703}{3662} & \frac{271}{3388} & \frac{180}{6751} & \frac{1151}{1364} & \frac{582}{1147} & \frac{-2020}{1263}
\end{bmatrix}}$$

\textbf{Step 2.} Next we calculate the resistance matrix using $$W(M) = \left((-M)^{\dagger}\right)_DJ+J\left((-M)^{\dagger}\right)_D-2(-M)^{\dagger}. $$ 

$W =$
$$\begin{bmatrix}
 { 0} & \frac{2414}{813} & \frac{1523}{439} & \frac{ 904}{387} & \frac{1765}{772} & \frac{1148}{479} & \frac{3423}{1052} & \frac{3593}{1832} & \frac{1857}{683} & \frac{757}{458} \\\\
 \frac{ 2414}{813} & { 0} & \frac{ 3}{2} & \frac{31}{30} & \frac{ 1973}{882} & \frac{1284}{547} & \frac{1426}{445} & \frac{2801}{1374} & \frac{2068}{687} & \frac{ 523}{241} \\\\
 \frac{ 1523}{439} & \frac{ 3}{2} & { 0} & \frac{  23}{15} & \frac{ 1207}{441} & \frac{ 914}{321} & \frac{1567}{423} & \frac{1744}{687} & \frac{4823}{1374} & \frac{1287}{482} \\\\
\frac{  904}{387} & \frac{  31}{30} & \frac{ 23}{15} & {0} & \frac{ 619}{386} & \frac{ 917}{535} & \frac{1373}{534} & \frac{ 912}{649} & \frac{ 801}{337} & \frac{ 919}{598} \\\\
 \frac{ 1765}{772} & \frac{1973}{882} & \frac{1207}{441} & \frac{ 619}{386} & { 0} & \frac{  47}{154} & \frac{ 199}{154} & \frac{1598}{1595} & \frac{4256}{2035} & \frac{653}{491} \\\\
 \frac{ 1148}{479} & \frac{1284}{547} & \frac{ 914}{321} & \frac{ 917}{535} & \frac{  47}{154} & { 0} & \frac{  96}{77} & \frac{  426}{383} & \frac{2706}{1229} & \frac{2993}{2078} \\\\
\frac{  3423}{1052} & \frac{1426}{445} & \frac{1567}{423} & \frac{1373}{534} & \frac{ 199}{154} & \frac{  96}{77} & {0} & \frac{ 837}{425} & \frac{1661}{543} & \frac{ 363}{158} \\\\
 \frac{ 3593}{1832} & \frac{2801}{1374} & \frac{1744}{687} & \frac{ 912}{649} & \frac{1598}{1595} & \frac{426}{383} & \frac{ 837}{425} & { 0} & \frac{ 755}{458} & \frac{1697}{1832} \\\\
\frac{  1857}{683} & \frac{2068}{687} & \frac{4823}{1374} & \frac{801}{337} & \frac{4256}{2035} & \frac{2706}{1229} & \frac{1661}{543} & \frac{ 755}{458} & { 0} & \frac{ 920}{651} \\\\
\frac{  757}{458} & \frac{ 523}{241} & \frac{1287}{482} & \frac{ 919}{598} & \frac{ 653}{491} & \frac{2993}{2078} & \frac{363}{158} & \frac{1697}{1832} & \frac{920}{651} & { 0} 
\end{bmatrix}
$$

\textbf{Step 3.}

Now the split system $s$ is found directly from $W$. We used the Neighbor Net algorithm, as implemented in SplitsTree \cite{split, hb}. The split system is shown as a split network in Figure~\ref{exr5split}. In this example, the clockwise order of $[n]$ is correct (all the circular minors of $M$ with respect to that order are non-negative, at least approximately.) There are 24 splits overall. The splits labeled $a$ and $b$ are potential bridges of the network.

  \begin{figure}[h]
     \centering
     \includegraphics[width=\textwidth]{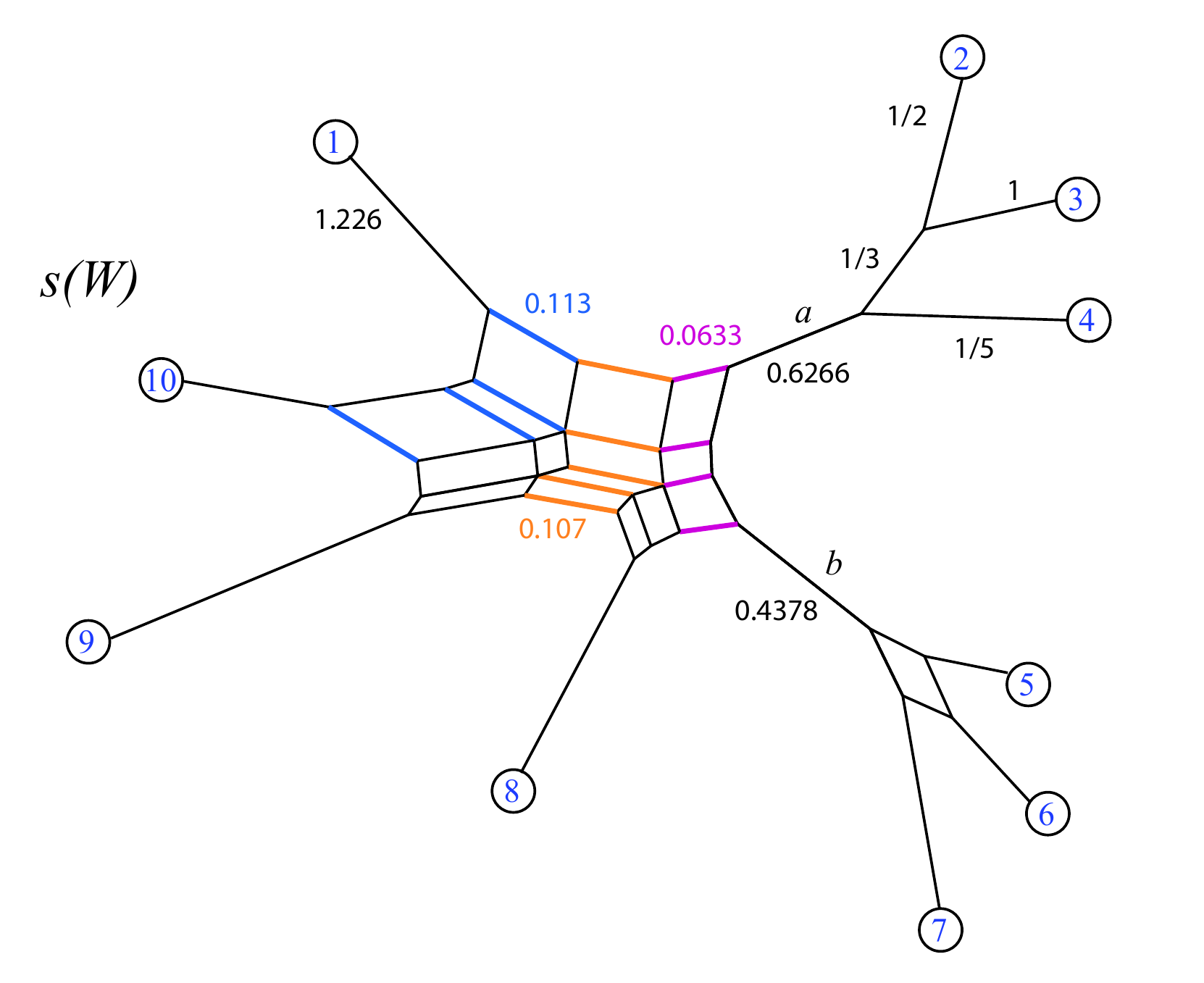}
     \caption{The split system for $W$, shown as a split network. Some of the split weights are shown. The bridges of $s$ labeled $a$ and $b$ are potential bridges of the network $N$.}
     \label{exr5split}
 \end{figure}
 
The split network bridge structure allows us to isolate the blobs. First we check the bridges $a$ and $b$ to see if they are real network features. We can check the larger connections with terminals closer to the bridge in order to minimize the workload. 
For potential bridge  $b$ the connections that suffice are those connections involving paths from node  5 to 7, but involving 
at most one node from $\{2,3,4\}$, since that cluster is also separated by a bridge (or cut-point).
Thus we check the circular minor of $M$ for the circular pair (1, 2, 5; 9, 8, 7). 
$$
det\begin{bmatrix}
    315/11626 & 467/3141 & 10/1373\\
    48/4937 & 311/5009 & 21/5009\\
    1019/14606 & 493/912 & 607/2106
\end{bmatrix} = \frac{27}{473351}
$$

Similarly for potential bridge $a$ we check the circular minor  (10, 1, 2; 8, 7, 4).

$$
det\begin{bmatrix}
    1151/1364 & 180/6751 & 4007/20036\\
    467/3141 & 10/1373 & 223/2170\\
    311/5009 & 21/5009 & 929/1405
\end{bmatrix} = \frac{17}{13246}
$$

Both those connections exist, since the minors are non-zero. Thus both $a$ and $b$ are actual bridges, since their proposed existence does not increase the set of connections.\\

\textbf{Step 4.}

Thus we can focus on smaller graphs. First we find the blob of size 6. We choose a terminal connected by an internal path to each of its corners: here we 
chose terminals $\{1,2,5,8,9,10\}$.   We relabel those as $1,\dots,6$. We let $P = W|\{1,2,5,8,9,10\}$, resistance restricted to those terminals. Then $S$ is the response matrix 
associated to $P$:

$$P =\begin{bmatrix}
 0 & 2414/813 & 1765/772 & 3593/1832 & 1857/683 &  757/458  \\
    2414/813 & 0 & 1973/882 & 2801/1374 & 2068/687 &  523/241 \\ 
    1765/772 & 1973/882 & 0 & 1598/1595 & 4256/2035 & 653/491  \\
    3593/1832 & 2801/1374 & 1598/1595 &   0 &  755/458 & 1697/1832 \\ 
    1857/683 & 2068/687 & 4256/2035 & 755/458 & 0 &  920/651  \\
     757/458 &  523/241 &  653/491 & 1697/1832 & 920/651 & 0 
\end{bmatrix}$$

$$S =\begin{bmatrix}
   -1159/1742 & 373/4844 & 292/3793 & 428/3609 &  67/3461 & 2377/6366  \\
     373/4844  &   -1390/2439 & 287/1819 & 474/2581 &  67/2699 & 211/1666 \\ 
     292/3793 & 287/1819  &   -1307/1116 & 3394/5211 & 163/2350 & 401/1859  \\
     428/3609 & 474/2581 & 3394/5211  &   -1520/869 &  809/3585 & 485/851 \\ 
  67/3461 &  67/2699 & 163/2350 & 809/3585 & -592/753 &  274/613  \\
    2377/6366 & 211/1666 & 401/1859 & 485/851 &  274/613 & -674/389  
\end{bmatrix}$$

 Next we use the algorithm given by Curtis and Morrow in Chapter 9 of \cite{curtisbook}, starting with the response matrix $S$. We number points on the circle as seen in Figure~\ref{exr5strand}: the $2n$ \emph stub points $t_i$ begin just before terminal 1, in clockwise order, and are just before and after each terminal. The $2n$ \emph{cutting points} $x_i$ begin with $x_1$ just before $t_1$, and are between successive stub points; landing on the terminals and on the midpoints between terminals.  On the left of  Figure~\ref{exr5strand} we show the matrices (note that sometimes more rows are needed, as in Chapter 9 of \cite{curtisbook}.) First we find the matrix $MR$ = MaxRespected($S$),  where entry $MR(i,j)$ is the size $k$ of the largest 
$k$-connection that \emph{respects the cut} from $x_i$ to $x_j$. That is, the largest $k$ such that the circular minor of size $k$ is positive and the connection $(A,B)$ has terminals $A$ all entirely on one side of the line from $x_i$ to $x_j$ (when the line begins or ends on a terminal those terminals don't count). For instance $MR(1,7)=3$ because the circular pair (1, 2, 3; 6, 5, 4) has circular minor in $S$ of:
$$
det\begin{bmatrix}
    2377/6366 & 67/3461 & 428/3609\\
    211/1666 & 67/2699 & 474/2581\\
    401/1859 & 163/2350 & 3394/5211
\end{bmatrix} = \frac{16}{18659}
$$
In contrast $MR(2,8)=2$ since the largest size connection that can respect the cut is of size 2, and indeed the pair (2, 3; 6, 5) has a positive minor. One more example: $MR(3,9)=2$ since the minor for the pair (2, 3, 4; 1, 6, 5) is:
$$
det\begin{bmatrix}
   373/4844   & 211/1666 &  67/2699 \\ 
     292/3793 & 401/1859 & 163/2350   \\
     428/3609  & 485/851 &  809/3585 
\end{bmatrix} = -0.000000002
$$
Unfortunately, that last calculation is only approximately zero; the computer has accumulated some rounding errors by the time we are looking at minors of $S$. This demonstrates the fact that Curtis and Morrow point out in the introduction to \cite{curtisbook}: the problem is ill-posed in that it is sensitive to rounding in computation.

Then the matrix $RE$ = Re-entrants($S$) = NumTerminals(6)-MaxRespected($S$). NumTerminals($n$) is the matrix with row 1 given by $0,0,1,1,2,2,\dots,n-1,n-1$ and row $i$ for $i>1$ given by the same string shifted right by $\lfloor{i/2}\rfloor+1$ places (with initial zeroes.)  
We draw the a strand from $t_i$ to $t_j$ when row $i+1$ differs first from row $i$ in column $j+1$ of $RE$.  Notice that all we need 
to find are 5 strands, the last is determined. 

  \begin{figure}[h]
     \centering
     \includegraphics[width=\textwidth]{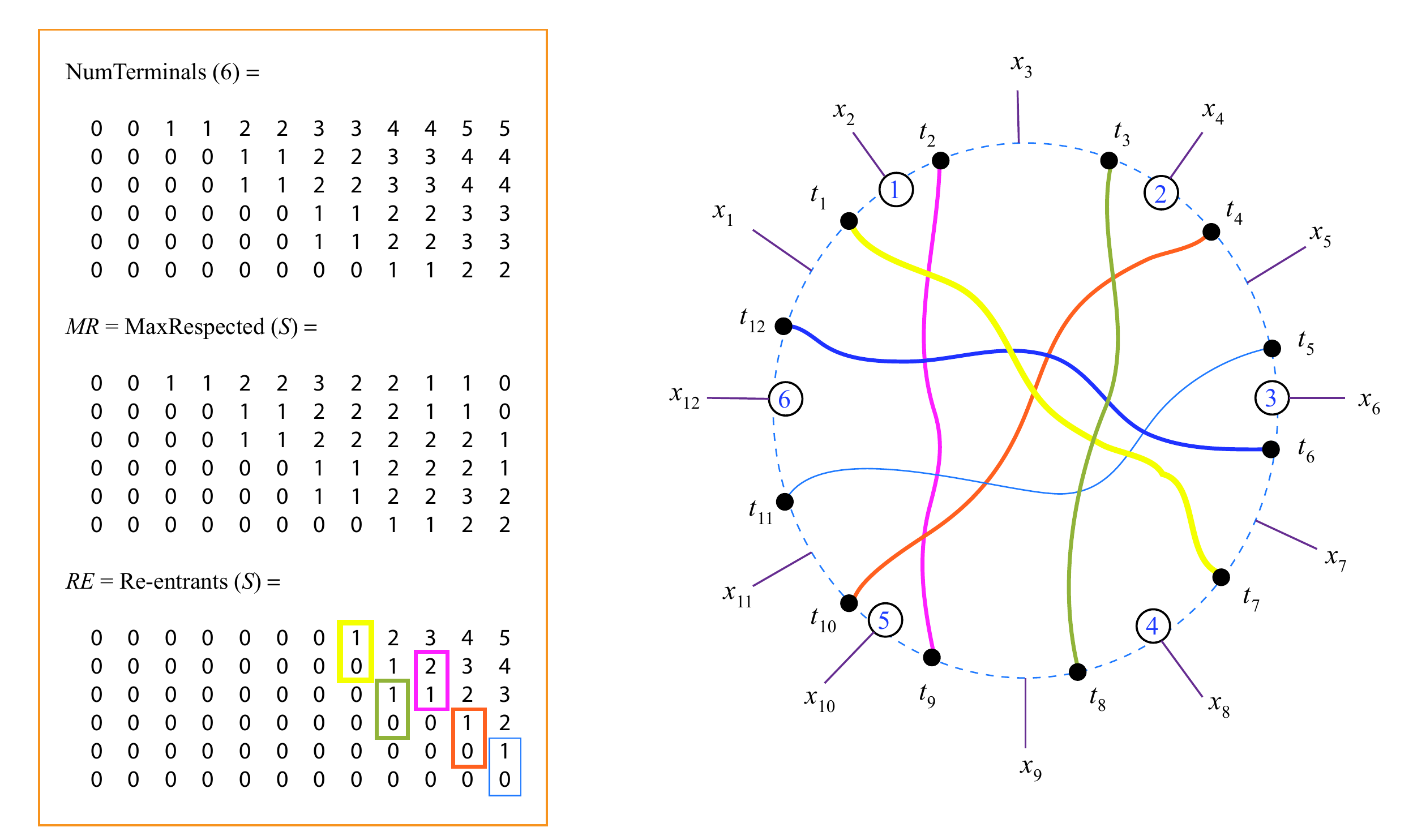}
     \caption{The matrix $S$ is used to find the matrix of re-entrants, which is used in turn to find the perfect matching. Strands are drawn to represent that matching, taking care to avoid lenses. }
     \label{exr5strand}
 \end{figure}

Next, in Figure~\ref{exr5blob},  we draw the local network for the size-6 blob by placing a node in each shaded region (the first nodes we place around the circle are the terminals). Then the graph of the subnetwork is reconstructed.
 
  \begin{figure}[h]
     \centering
     \includegraphics[width=\textwidth]{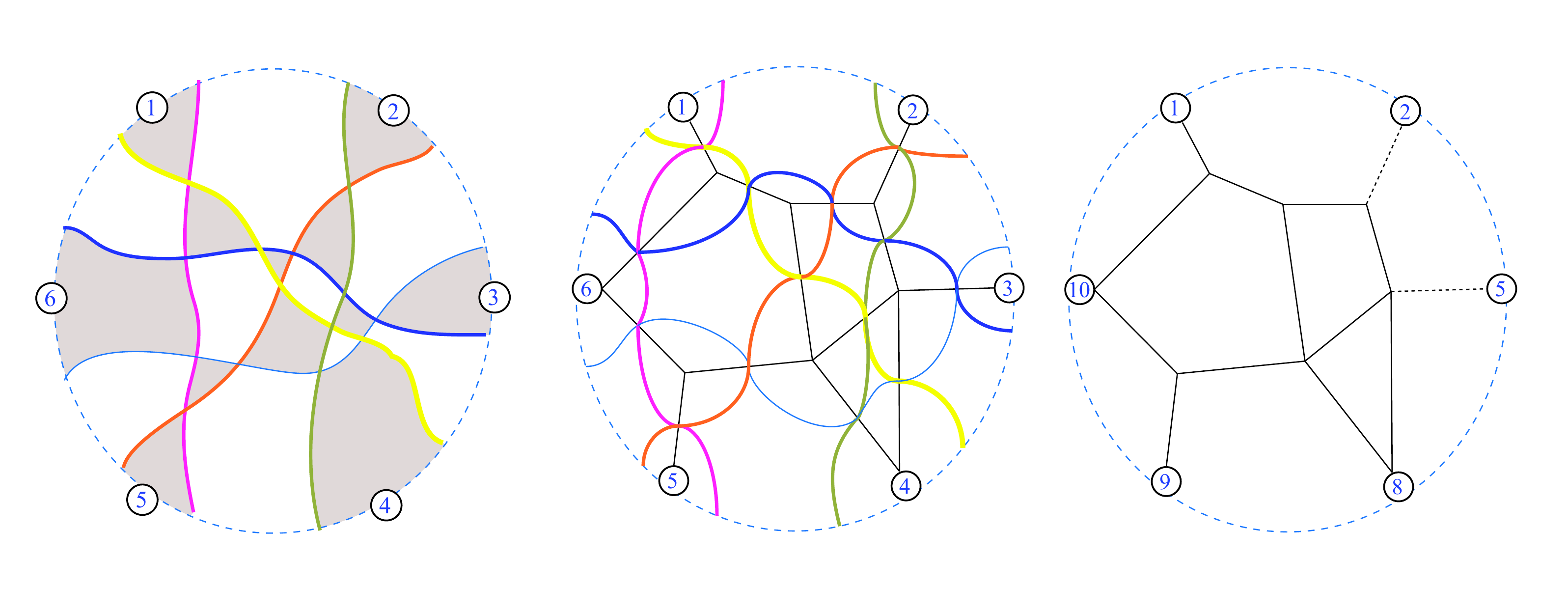}
     \caption{ The checker-board shading on the left is a guide for the nodes of our sub-network. Each shaded region is a node, and nodes are connected by an edge when their shaded regions meet at a crossing of the strands. (The strands form the medial graph.) We show the sub-network in the third picture with dashed edges going to the eventual terminals: those must be replaced by other subnetworks.}
     \label{exr5blob}
 \end{figure}

The process is repeated for the size 4 blob in Figure~\ref{exr5four}, using the matrix $Q$ which is $W$ restricted to terminals $\{1,5,6,7\}$, and $T$
which is the response matrix associated to $Q$.

$$Q =\begin{bmatrix}
  0 & 1765/772 & 1148/479 & 3423/1052  \\
    1765/772 & 0 &   47/154 &  199/154  \\
    1148/479 &   47/154 & 0 &   96/77    \\
    3423/1052 & 199/154 &   96/77 & 0
\end{bmatrix}$$

$$T =\begin{bmatrix}
    -985/2212 & 298/1041 & 211/1769 & 178/4477  \\
     298/1041 &    -2294/635 & 1141/381 &  379/1143 \\ 
     211/1769 & 1141/381 & -3962/1105 & 521/1105  \\
     178/4477 & 379/1143 & 521/1105 & -547/649
\end{bmatrix}$$

  \begin{figure}[h]
     \centering
     \includegraphics[width=\textwidth]{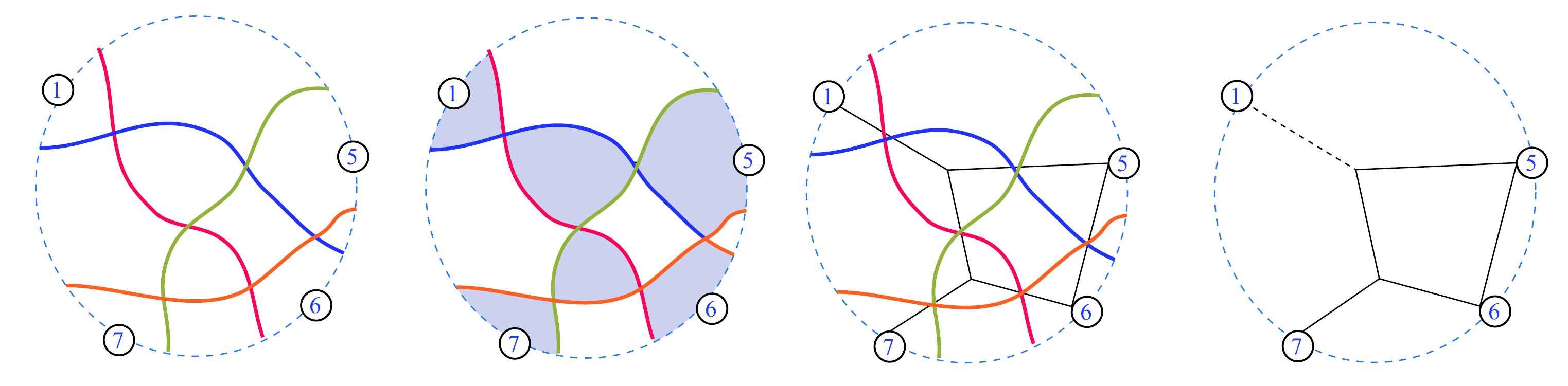}
     \caption{The strands, shading, and resulting subnetwork for the smaller blob of $N.$}
     \label{exr5four}
 \end{figure}

\textbf{Step 5.} 

Finally in Figure~\ref{exr5tot} the two blobs and the tree-like portion are rejoined with the bridges $a$ and $b$ to see the full network. Once the graph is drawn, an algorithm such as Kenyon and Wilson's from \cite{kenyon} can be chosen to recover the conductances from $M$.

  \begin{figure}[h]
     \centering
     \includegraphics[width=\textwidth]{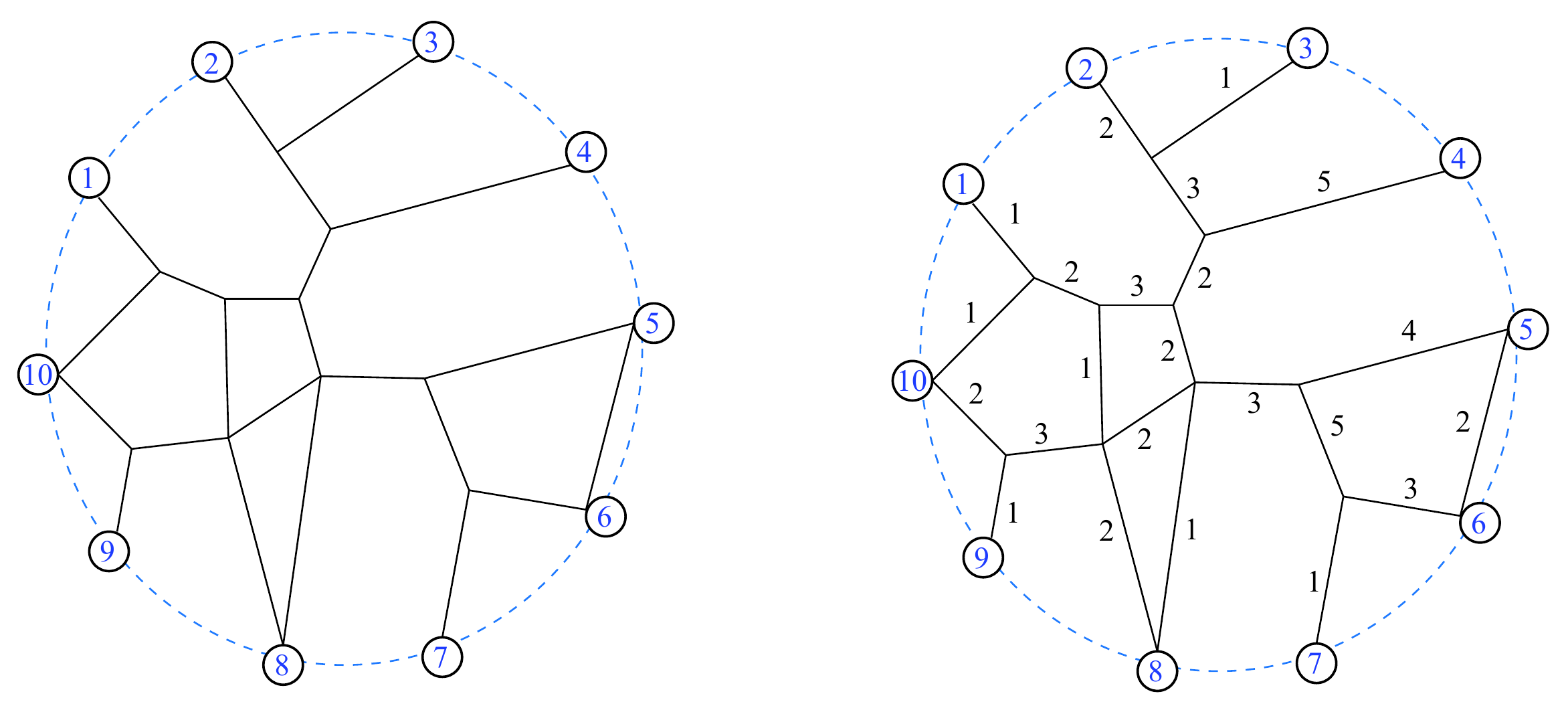}
     \caption{The final reconstruction of $N$, from the sub-networks found above. The conductances here are the ones used to create the example. }
     \label{exr5tot}
 \end{figure}

\section{Spaces, Polytopes and future directions}\label{new}

The Symmetric Travelling Salesman polytope STSP($n$) has as vertices all the circular orders of $[n].$ Relatedly, the space of Kalmanson metrics (and thus circular planar split systems CSN($n$)) is usually studied all at once, for all circular orderings of $[n]$, as in \cite{scalzo}. The space of split systems for a particular order, like the clockwise ordering $1,\dots,n$ that we see in this paper, is a top-dimensional cell in the total space $CSN(n)$. There are  $(n-1)!/2$  top-dimensional cells, one for each circular order, that are glued along shared faces. The structure of both the fundamental cells and the global space of circular split networks is studied in \cite{dev-petti} and \cite{terhorst}. There is certainly also  a global structure for the total space of circular planar electrical networks, with a top-dimensional cell for each circular order.  A given circular planar electrical network $N$ may have several consistent re-orderings of its terminals, each also circular planar. Permuting its matrix $M$ to one of those orderings will preserve the positivity in circular minors. Thus $N$ will be shared
by several cells of the total space. We would like to describe the topology of that total space.

However,  most studies now use the terminology \emph{space of circular planar electrical networks} to refer to only a certain given circular ordering, a single cell of the total space.  The original work of Curtis and Morrow \cite{curtis1} gave us $\Omega_n$, the space of response matrices for a given circular order of $[n].$ Alman, Lian and Tran \cite{alman} described the graded poset $EP_n$ of equivalence classes of circular planar (unweighted) graphs,  indexed by sets of critical graphs, or by certain perfect matchings. Lam \cite{lam1} extended this poset, and compactified the space of circular planar networks (for the given circular ordering) by allowing all perfect matchings on $[2n].$ The matchings which identify terminals correspond to infinite conductance, a shorting of the circuit between boundary nodes so that they are considered a single node. This compactified space of circular planar electrical networks is called $E_n$, with cells indexed by all the matchings $P_n.$ Lam shows that $E_n$ is embedded as a linear slice of the totally non-negative Grassmannian.  Postnikov shows in \cite{postnikov} that the non-negative Grassmannian has points corresponding to the matrix $M$ of boundary measurements from weighted \emph{directed} circular planar networks. 
In \cite{hersh1} Hersh and Kenyon show that the posets $P_n$ are shellable, and that each $P_n$ can be realized as the face poset of a regular CW complex. This same regularity property has been conjectured (see \cite{lam-ball} and \cite{lam2}) for the compactified space of circular planar electrical networks $E_n$. Hersh and Kenyon point out that their shellability result suggests that the stratified spaces $E_n$ may be regular CW complexes with each cell closure homeomorphic
to a closed ball. Can  a better understanding of the map $R_w$ taking networks to split systems settle this question?

In \cite{durell} and \cite{scalzo} the authors show that the form of a 1-nested phylogenetic (or electrical) network can be found by linear programming on a series of polytopes. These include the (BME) Balanced Minimum Evolution polytopes of phylogenetic trees, and a series of polytope families BME($n,k$)  for $n\ge 3$ and  $0\le k \le n-3. $ When $k=0$ these are the Symmetric Travelling Salesman polytopes STSP($n$). However, all of the 1-nested network polytopes BME($n,k$) are found nested inside STSP($n$), so linear programming on the latter using the resistance metric $W$ as a linear functional can reveal the bridge structure of any circular planar network. (The Neighbor Net algorithm is a greedy approach to the same problem.) In \cite{scalzo} there is shown a Galois connection between the network-faces of the symmetric travelling salesman polytope and the cells of the Kalmanson complex. Can the asymmetric traveling salesman polytope play a similar role for directed networks?

\bibliographystyle{amsplain}
\bibliography{phylokron}{}

\end{document}